\definecolor{midpurple}{rgb}{0.6,0.2,0.4}
\numberwithin{equation}{section}			
\newtheorem{theorem}{Theorem}[section]			
\newtheorem{proposition}[theorem]{Proposition}	
\newtheorem{lemma}[theorem]{Lemma}
\newtheorem*{theorem*}{Theorem}
\newtheorem*{proposition*}{Proposition}
\newtheorem*{lemma*}{Lemma}
\newtheorem*{corollary*}{Corollary}
\newtheorem*{fact*}{Fact}
\newtheorem*{conjecture*}{Conjecture}
\newtheorem*{question*}{Question}
\newtheorem*{remark*}{Remark}
\newtheorem*{definition*}{Definition}
\renewcommand{\leq}{\leqslant}	
\renewcommand{\geq}{\geqslant}	
\newcommand{\N}{\mathbb{N}}		
\newcommand{\Z}{\mathbb{Z}}		
\newcommand{\Q}{\mathbb{Q}}		
\newcommand{\R}{\mathbb{R}}		
\newcommand{\C}{\mathbb{C}}		
\newcommand{\T}{\mathbb{T}}		
\newcommand{\eps}{\varepsilon}			
\newcommand{\wt}{\widetilde}		
\newcommand{\wh}{\widehat}			
\DeclareMathOperator{\Supp}{Supp}	
\newcommand{\transp}{\mathsf{T}}						
\newcommand{\dm}{\mathrm{d}m}
\newcommand{\dalpha}{\mathrm{d}\alpha}
\newcommand{\dlambda}{\mathrm{d}\lambda}
\newcommand{\bfb}{\mathbf{b}}
\newcommand{\bfk}{\mathbf{k}}
\newcommand{\bfm}{\mathbf{m}}
\newcommand{\bfn}{\mathbf{n}}
\newcommand{\bfr}{\mathbf{r}}
\newcommand{\bfu}{\mathbf{u}}
\newcommand{\bfx}{\mathbf{x}}
\newcommand{\bfalpha}{\boldsymbol{\alpha}}	
\newcommand{\bfell}{\boldsymbol{\ell}}	
\newcommand{\bfgamma}{\boldsymbol{\gamma}}
\newcommand{\bftheta}{\boldsymbol{\theta}}	
\newcommand{\bfxi}{\boldsymbol{\xi}}	
\newcommand{\dbfx}{\mathrm{d}\mathbf{x}}
\newcommand{\dbfalpha}{\mathrm{d}\boldsymbol{\alpha}}
\newcommand{\dbftheta}{\mathrm{d}\boldsymbol{\theta}}
\newcommand{\dbfxi}{\mathrm{d}\boldsymbol{\xi}}
\newcommand{\frakM}{\mathfrak{M}}		
\newcommand{\frakm}{\mathfrak{m}}		
\newcommand{\calU}{\mathcal{U}}			
\begin{document}

\title{On restriction estimates for discrete quadratic surfaces}

\author{Kevin Henriot, Kevin Hughes}

\date{}

\begin{abstract}
We obtain truncated restriction estimates of an unexpected form for
discrete surfaces
\begin{align*}
	S = \{\, ( n_1 , \dots , n_d , R( n_1 , \dots, n_d ) ) \,,\, n_i \in [-N,N] \cap \Z \,\},
\end{align*}
where $R$ is an indefinite quadratic form with integer matrix.
\end{abstract}

\maketitle

\section{Introduction}
\label{sec:intro}

We fix a non-degenerate
quadratic form $R$ in $d$ variables with integer matrix.
We are interested in restriction estimates
for quadratic surfaces in $\Z^{d+1}$ of the form
\begin{align}
\label{eq:intro:QuadSurface}
	S = \{\, ( n_1 , \dots , n_d , R( n_1 , \dots, n_d ) ) \,,\, n_i \in [-N,N] \cap \Z \,\},
\end{align}
in the case where $R$ is indefinite.
This paper should be seen as a companion to~\cite{HH:epsremoval_parab},
which concerned the case $R(n) = n^k$ of $k$-th powers and
the case $R(\bfn) = n_1^d + \dotsb + n_d^k$ of `$k$-paraboloids'; 
the methods employed here are similar but our results take a different shape.

In the case $d = 1$, $R(x) = x^2$ of the $2D$ parabola, Bourgain~\cite{Bourgain:ParabI}
resolved the natural restriction conjecture in the supercritical range,
via discrete versions of the Tomas--Stein argument~\cite[Chapter~7]{Wolff:Book}
and the Hardy--Littlewood circle method.
By powerful new methods of multilinear harmonic analysis,
Bourgain and Demeter~\cite[Theorem~2.4]{BD:DecouplConj}
later established the natural restriction conjecture 
for arbitrary definite irrational paraboloids 
$R(\bfx) = \theta_1 x_1^2 + \dotsb + \theta_d x_d^2$,
$\theta_i > 0$, up to $\eps$ losses.
In a subsequent work~\cite[Corollary~1.3]{BD:Hypersurf},
they also obtained the conjectured estimate for indefinite paraboloids.
To state those results precisely, we set up some notation.
The extension operator
acting on a sequence $a : \Z^d \rightarrow \C$ supported on $[-N,N]^d$ 
is denoted by
\begin{align}
\label{eq:intro:FaDef}
	F_a( \alpha, \bftheta )
	&= \sum_{\bfn \in \Z^d} a(\bfn) e( \alpha R(\bfn) + \bftheta \cdot \bfn )
	&&(\alpha \in \T, \bftheta \in \T^d).
\end{align}

\begin{theorem}[Bourgain--Demeter~\cite{BD:Hypersurf}, special case]
\label{thm:intro:RestrHyperbParab}
Suppose that $R$ is a non-degenerate indefinite quadratic form in $d$ variables
with integer matrix and signature $(p,q)$, and let $s = \min(p,q)$.
We have
\begin{align*}
	\| F_a \|_p^p
	\lesssim_\eps
	\begin{cases}
		N^{\frac{sp}{2} - s + \eps} \| a \|_2^p 
		& \text{for $2 \leq p \leq \frac{2(d - s + 2)}{d - s}$,}
		\\
		N^{\frac{dp}{2} - (d+2)} \| a \|_2^p
		&\text{for $p > \frac{2(d - s + 2)}{d - s}$.}
	\end{cases}
\end{align*}
\end{theorem}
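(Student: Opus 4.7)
The plan is to first prove Theorem~\ref{thm:intro:RestrHyperbParab} at the critical exponent $p_c = 2(d-s+2)/(d-s)$ by transferring the Bourgain--Demeter continuous restriction estimate for the indefinite paraboloid, and then to extend to the full range by interpolating with the trivial $L^2$ and $L^\infty$ bounds. A rational change of variables diagonalizes $R$ and reduces matters to a standard indefinite form (only affecting absolute constants and refining the lattice by a finite-index subgroup).

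For the critical bound I would smooth the discrete data by bumps: choosing a Schwartz $\phi$ with $\phi(0) = 1$ and $\wh\phi$ of small compact support, the interpolant $g = \sum_\bfn a(\bfn)\phi(\cdot - \bfn)$ makes the $L^{p_c}$-norm of $F_a$ on $\T^{d+1}$ comparable to the $L^{p_c}$-norm of the continuous extension $E_g$ on a large box in $\R^{d+1}$ via standard Poisson summation. After rescaling $\bfxi \mapsto N\bfxi$ to bring the parameter domain to $[-1,1]^d$, the Bourgain--Demeter estimate
\[
	\|E_{\wt g}\|_{L^{p_c}(B_{N^2})} \lesssim_\eps N^\eps \|\wt g\|_{L^2}
\]
(with $\wt g(\bfxi) = g(N\bfxi)$) applies on a ball of radius $\sim N^2$. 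Tracking the Jacobian $N^{d+2}$ from the rescaling and $\|\wt g\|_2 \sim N^{-d/2}\|a\|_2$, this yields
\[
	\|F_a\|_{p_c}^{p_c} \lesssim_\eps N^{dp_c/2 - (d+2) + \eps} \|a\|_2^{p_c},
\]
and the identity $\tfrac{dp_c}{2}-(d+2) = \tfrac{sp_c}{2}-s = \tfrac{2s}{d-s}$ at the critical exponent rewrites this in the form required.

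For the rest of the range, Parseval gives $\|F_a\|_2 = \|a\|_2$ (distinct $\bfn$ yield distinct Fourier characters on $\T^{d+1}$) and Cauchy--Schwarz gives $\|F_a\|_\infty \leq \|a\|_1 \leq (2N+1)^{d/2}\|a\|_2$. For $2 \leq p \leq p_c$, log-convexity $\|F_a\|_p \leq \|F_a\|_2^{1-t}\|F_a\|_{p_c}^t$ with $\tfrac{1}{p} = \tfrac{1-t}{2} + \tfrac{t}{p_c}$ produces the exponent $\tfrac{sp}{2} - s + \eps$ after a short computation; for $p > p_c$, $\|F_a\|_p^p \leq \|F_a\|_{p_c}^{p_c}\|F_a\|_\infty^{p-p_c}$ gives $\tfrac{dp}{2} - (d+2) + \eps$, the $\eps$ being absorbed into $\lesssim_\eps$. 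The main obstacle is the deep Bourgain--Demeter continuous restriction estimate for the indefinite paraboloid, which I would treat as a black box; the transfer to the discrete setting is routine once one tracks the rescalings. The $s$-dependence in $p_c$ reflects the existence of an $s$-dimensional isotropic subspace of $R$, whose characteristic sublattice furnishes the matching lower bound $N^{sp/2-s}$.
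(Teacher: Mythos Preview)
Your diagonalization step and the subcritical/critical estimates are essentially what the paper does in Appendix~\ref{sec:appdiag}: it reduces to a diagonal form by a rational linear change of variables and then invokes Bourgain--Demeter's discrete result for diagonal forms \cite[Corollary~1.3]{BD:Hypersurf} directly as a black box. You instead re-derive the continuous-to-discrete transfer via bump interpolation and Poisson summation; this is more work but amounts to the same thing, since that is how the discrete corollary is obtained in \cite{BD:Hypersurf} in the first place. Your interpolation between $L^2$ and $L^{p_c}$ for the subcritical range is also correct and matches the claimed exponent.

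There is, however, a genuine gap in the supercritical range. Interpolating the critical estimate with the trivial $L^\infty$ bound gives
\[
\|F_a\|_p^p \lesssim_\eps N^{\frac{dp}{2} - (d+2) + \eps} \|a\|_2^p,
\]
and this $\eps$ in the exponent of $N$ \emph{cannot} be ``absorbed into $\lesssim_\eps$'': the notation $\lesssim_\eps$ allows the implicit constant to depend on $\eps$, but a factor of $N^\eps$ is not $O_\eps(1)$. The theorem as stated asserts an $\eps$-free exponent $\frac{dp}{2} - (d+2)$ for $p > p_c$, and this genuinely requires an additional ingredient. The paper obtains the $\eps$-full bound exactly as you do, and then removes the $\eps$ via (a minor variant of) Bourgain's $\eps$-removal lemma from~\cite{Bourgain:ParabI}; alternatively, one can use the truncated estimate of Theorem~\ref{thm:intro:TruncRestrQuad} together with a standard $\eps$-removal argument. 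Either way, this step is nontrivial and is precisely one of the points the present paper is addressing --- you should not skip it.
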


While this is stated only for diagonal forms in~\cite{BD:Hypersurf},
a simple diagonalization argument allows one to reduce to this case.
There is also an extra $N^\eps$ factor in the supercritical range in that reference, 
which can be removed by (a minor variant of) Bourgain's $\eps$-removal estimate~\cite{Bourgain:ParabI};
we refer to Appendix~\ref{sec:appdiag} for the details.
The exponent of $N$ in Theorem~\ref{thm:intro:RestrHyperbParab} 
is sharp for even integer exponents $p > \frac{2(d - s + 2)}{d - s}$,
as can be seen by taking $a \equiv 1$ and using the circle method to obtain an asymptotic.
As explained in~\cite{BD:Hypersurf}, the lower bound 
$\| F_a \|_p^p \gtrsim N^{\frac{sp}{2} - s} \| a \|_2^p$ also holds for
a sequence supported on a subspace of dimension $s$.
More precisely, assume for simplicity that
$R(\bfx) = \sum_{i=1}^s x_i^2 - \sum_{i=s+1}^d x_i^2$ with $s \leq d/2$,
then $a(\bfn) = 1_{[N]^{2s}}(\bfn) \prod_{i=1}^s 1_{n_i = n_{s+i}}$ is an (approximate) extremizer.
Note that $|F_a| \leq N^{s/2} \| a \|_2 \leq N^{d/4} \| a \|_2$ is rather small in that case.
Our main result adapts the proof of Bourgain's $\eps$-removal lemma~\cite{Bourgain:ParabI}
to indefinite quadratic parabolas, and we obtain an intriguing bound
for the truncated integral.

\begin{theorem}
\label{thm:intro:TruncRestrQuad}
Let $R$ be a non-degenerate indefinite quadratic
form in $d \geq 1$ variables with integer matrix and signature $(p,q)$,
and let $s = \min(p,q)$.
There exists $C > 0$ such that
\begin{align}
\label{eq:intro:TruncRestrQuad}
	\int_{ |F_a| \geq CN^{d/4} \| a \|_2 } |F_a|^p \dm
	\lesssim
	N^{\frac{dp}{2} - (d+2)} \| a \|_2^p
\end{align}
for $p > \frac{2(d+2)}{d}$.
\end{theorem}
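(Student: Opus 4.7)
The approach follows the scheme of Bourgain's $\eps$-removal lemma~\cite{Bourgain:ParabI}, as developed in the companion paper~\cite{HH:epsremoval_parab}, adapted to the indefinite quadratic setting. For exponents above the Bourgain--Demeter critical value $p_c := \frac{2(d-s+2)}{d-s}$, Theorem~\ref{thm:intro:RestrHyperbParab} already provides the untruncated bound $\|F_a\|_p^p \lesssim N^{dp/2 - (d+2)}\|a\|_2^p$, so \eqref{eq:intro:TruncRestrQuad} is immediate there. The delicate range is $\frac{2(d+2)}{d} < p \leq p_c$, where the subspace-supported extremizers $a(\bfn) = 1_{[N]^{2s}}(\bfn)\prod_{i=1}^s 1_{n_i = n_{s+i}}$ satisfy $|F_a| \sim N^{s/2}\|a\|_2$ and obstruct the untruncated estimate. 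Crucially, since $s \leq d/2$, all such extremizers lie below the truncation threshold $T := CN^{d/4}\|a\|_2$.

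My plan is to establish the truncated weak-type Tomas--Stein endpoint estimate
\begin{align*}
	|\{|F_a| \geq \lambda\}| \lesssim \lambda^{-p_0}\|a\|_2^{p_0}
	\qquad (\lambda \geq T),
\end{align*}
where $p_0 := \frac{2(d+2)}{d}$. Given this, the theorem follows by dyadic integration against $\lambda^{p-1}\dlambda$ over $[T, \|F_a\|_\infty]$: since $\|F_a\|_\infty \leq N^{d/2}\|a\|_2$ and $p > p_0$, the dyadic sum is dominated by the top scale, yielding $N^{d(p-p_0)/2}\|a\|_2^p = N^{dp/2 - (d+2)}\|a\|_2^p$.

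To prove the weak-type bound, I would apply Bourgain's $TT^*$ scheme. Take a maximal $N^{-2}$-separated subset $\{\xi_j\}_{j=1}^M$ of $E_\lambda$. A Lipschitz argument on the natural scale yields $|E_\lambda| \lesssim M N^{-2(d+1)}$, so it suffices to establish the entropy bound $M \lesssim \lambda^{-p_0}\|a\|_2^{p_0}N^{2(d+1)}$. The Schur test on the Gram matrix $(F_1(\xi_j - \xi_k))_{j,k}$ gives
\begin{align*}
	M\lambda^2 \leq \|a\|_2^2 \max_j \sum_k |F_1(\xi_j - \xi_k)|,
\end{align*}
and one then estimates the right-hand side by dyadically decomposing $|F_1|$ and applying the Bourgain--Demeter level-set bound coming from $\|F_1\|_{p_c}^{p_c} \lesssim N^{(d+s)p_c/2 - s + \eps}$, together with the $N^{-2}$-separation of the~$\xi_j$.

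The principal obstacle is matching the excess factor $N^{sp_c/2 - s} = N^{2s/(d-s)}$ in the Bourgain--Demeter input against the $N^{d/4}$ truncation, uniformly in the signature. A crude Schur-test estimate is too lossy and would require a substantially stronger truncation; reaching the sharp threshold $N^{d/4}$ most likely requires a finer decomposition of $F_1$ via the Hardy--Littlewood circle method, isolating major-arc pieces (on which $F_1$ reduces to a Gaussian-like integral rescaled by the denominator) from minor-arc pieces handled by Weyl-type bounds, in the spirit of~\cite{HH:epsremoval_parab}. The arithmetic structure thus extracted is what I expect to allow the extremizer excess to be absorbed exactly by the $N^{d/4}$ truncation.
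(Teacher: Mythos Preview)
Your proposal identifies the right overall shape --- a level-set estimate above the threshold $CN^{d/4}\|a\|_2$, then integration --- but the route you sketch is not the one the paper takes, and your own closing paragraph correctly flags that the Schur-test/Bourgain--Demeter route is too lossy to reach the sharp threshold. Indeed the Bourgain--Demeter input carries the excess $N^{sp_c/2 - s}$, and there is no mechanism in the separated-point framework to cancel it against the truncation; you end up needing exactly the circle-method decomposition you allude to, at which point the separated points and Schur test are no longer doing any work.

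The paper bypasses the entropy/Schur-test layer entirely and does not invoke Theorem~\ref{thm:intro:RestrHyperbParab} at any point. It works directly with the Tomas--Stein inequality $\lambda^2|E_\lambda|^2 \leq \langle F \ast f, f\rangle$, where $F$ is the unweighted exponential sum~\eqref{eq:prelims:FDef}, and then decomposes $F$ via the arc mollifiers of Section~\ref{sec:mollif}. The minor-arc piece satisfies $\|F_\frakm\|_\infty \lesssim N^{d/2}$ (Proposition~\ref{thm:qf:MinorArcPieceBound}), so its contribution $N^{d/2}|E_\lambda|^2$ is absorbed precisely when $\lambda \geq CN^{d/4}$; this is where the truncation enters, and it has nothing to do with subspace extremizers. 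The major-arc piece is further split as $F_\frakM = F_1 + F_2$ according to whether $Q \leq Q_1 = N^{\eps_1}$ or not, and handled by the interpolated convolution estimates of Propositions~\ref{thm:qf:InterpolEasyGenBound} and~\ref{thm:qf:InterpolHardBound}, the latter relying on Bourgain's truncated-divisor bound (Lemma~\ref{thm:qf:DivBound}). The output is the level-set bound of Proposition~\ref{thm:qf:epsFreeLevelSetEst}: $|E_\lambda| \lesssim_q N^{dq/2 - (d+2)}\lambda^{-q}$ for any $q$ slightly above $p_0$, which suffices for the integration step.

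A minor slip in your sketch: the natural box in $\T^{d+1}$ has scale $N^{-2}$ in the first coordinate and $N^{-1}$ in the remaining $d$, so the Lipschitz/covering argument gives $|E_\lambda| \lesssim M N^{-(d+2)}$, not $M N^{-2(d+1)}$.
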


Note that the upper bound above is of order less
than the order $N^{\frac{sp}{2} - s + O(\eps)} \| a \|_2^p$
of the complete integral, as given by Theorem~\ref{thm:intro:RestrHyperbParab}.
This can be seen as an inverse result
saying that for sequences $a : \Z^d \rightarrow \C$ 
maximizing the ratio $\| F_a \|_p / \| a \|_2$,
the ``mass'' of the integral $\int |F_a|^p \dm$ 
is concentrated on a set where $|F_a|$ has square-root cancellation
(in comparison with the trivial Cauchy-Schwarz bound $|F_a| \leq N^{d/2} \| a \|_2$).
This is consistent with the above example of maximizer
supported on the subvariety $n_i = n_{s+i}$, $1 \leq i \leq s$.
Such a behavior would be impossible in the definite case,
where the tail integral over $\{ |F_a| \leq N^{d/2 - \zeta} \| a \|_2 \}$ always contributes
less than the main term, for any $\zeta > 0$.

In proving Theorem~\ref{thm:intro:TruncRestrQuad},
we do not have a simple diagonalization argument at our disposal
to estimate the truncated integral~\eqref{eq:intro:TruncRestrQuad}.
Therefore we adapt the approach of Bourgain~\cite{Bourgain:ParabI}
for the parabola $(x_1,\dots,x_d,x_1^2 + \dotsb + x_d^2)$
to use multidimensional exponential sum estimates,
whereas in the diagonal case the relevant exponential sum~\eqref{eq:prelims:FDef} 
splits into one-dimensional quadratic Weyl sums.
This process is successful since efficient bounds on quadratic exponential sums
are known classically, and we do not encounter 
certain difficulties described in~\cite{HH:epsremoval_parab}
for surfaces of high degree.

We note finally that the related problem of obtaining $\eps$-free estimates
in the full supercritical range for
indefinite irrational quadratic forms $R$ is still open,
although there is partial progress in this direction by Godet and Tzvetkov~\cite{GT:IndefParab} 
and Wang~\cite{Wang:IndefParab}.
In the definite case, the question has been settled
recently by Killip and Vi\c{s}an~\cite{KV:Parab}.


\textbf{Acknowledgements.}
The first author would like to thank Akos Magyar for stimulating discussions
on exponential sums in many variables.
The second author thanks Hiro Oh, Yuzhao Wang and Trevor Wooley for stimulating discussions on restriction theory.

\section{Notation}
\label{sec:notat}

For functions $f : \T^d \rightarrow \C$
and $g : \Z^d \rightarrow \C$ we define
the Fourier transforms of $f$ and $g$ by
$\wh{f}( \bfk ) = \int_{\T^d} f( \bfalpha ) e( - \bfalpha \cdot \bfk ) \dbfalpha$
and $\wh{g}( \bfalpha ) = \sum_{\bfn \in \Z^d} g(\bfn) e( \bfalpha \cdot \bfn ) $.
For a function $h : \R^d \rightarrow \C$
we define the Fourier transform by 
$\wh{h}(\bfxi) = \int_{\R^d} f(\bfx) e( - \bfxi \cdot \bfx ) \dbfx$.
Given a function $f : \R^d \rightarrow \R$
and two subsets $A,B$ of $\R^d$,
we write $A \prec f \prec B$
when $0 \leq f \leq 1$ everywhere,
$f = 1$ on $A$ and $f = 0$ outside $B$.

When $\mathcal{P}$ is a certain property, we
let $1_\mathcal{P}$ denote the boolean equal to $1$
when $\mathcal{P}$ holds and $0$ otherwise,
and when $E$ is a set we define the indicator function 
of $E$ by $1_E(x) = 1_{x \in E}$.
When $p \in [1,+\infty]$ is an exponent, 
we systematically denote by $p' \in [1,+\infty]$ 
its dual exponent satisfying $\frac{1}{p} + \frac{1}{p'} = 1$.
We let $\dm$ denote the Lebesgue measure on $\R^d$, or on 
$\T^d$ identified with any fundamental domain of the form $[\theta,1 + \theta)^d$.
For $q \geq 2$ we occasionally use $\Z_q$ as a shorthand for the group $\Z/q\Z$.

Throughout the article, we use the letter $\eps$ 
generically to denote a constant
which can be taken arbitrarily small, and whose value
may change in each occurence.

\section{Arc mollifiers}
\label{sec:mollif}

This section is a specialization of~\cite[Section~6]{HH:epsremoval_parab}
to the quadratic case $k=2$, and we include it for completeness.
Its aim is to describe a technical tool
due to Bourgain~\cite[Section~3]{Bourgain:ParabI}
and used in the proof of 
Theorem~\ref{thm:intro:TruncRestrQuad},
which consists essentially in a partition of unity adapted to major arcs.

We fix an integer $N \geq 1$, to be thought of as large.
We fix a smooth bump function $\kappa$ 
with $[-1,1] \prec \kappa \prec [-2,2]$.
Let $\wt{N} = 2^{\lfloor \log_2 N \rfloor}$, 
and for every integer $0 \leq s \leq \lfloor \log_2 N \rfloor$ define
\begin{align}
\label{eq:mollif:phiDef}
	\phi^{(s)} \coloneqq
	\begin{cases}
	\kappa( 2^s N \,\cdot\, ) - \kappa( 2^{s+1} N \,\cdot\, )
	&	\text{if $1 \leq 2^s < \wt{N}$},
	\\
	\kappa( 2^s N \, \cdot \,)
	&	\text{if $2^s = \wt{N}$}.
	\end{cases}
\end{align}
Note that we have
$\Supp( \phi^{(s)} ) \subset \frac{1}{ 2^s N } I_s$,
where $I_s = \pm [\frac{1}{2},2]$
if $1 \leq 2^s < \wt{N}$, and $I_s = [-2,2]$
if $2^s = \wt{N}$.
Furthermore, for every dyadic integer $1 \leq Q \leq N$, we have
\begin{align}
\label{eq:mollif:PartUnity}
	\sum_{Q \leq 2^s \leq N} \phi^{(s)}
	= \kappa( Q N \,\cdot\,).
\end{align}

We let $N_1 = c_1 N$, for a small constant $c_1 \in (0,1]$.
It is easy to check that the intervals
$\frac{a}{q} + [ -\frac{2}{ Q N } , \frac{2}{Q N} ]$,
$1 \leq a \leq q$, $q \sim Q$, $1 \leq Q \leq N_1$ are all disjoint.
For a dyadic integer $Q$ and an integer $0 \leq s \leq \log_2 N$,
we define
\begin{align}
\label{eq:mollif:PhiDef}
	\Phi_{Q,s} = \sum_{\substack{ (a,q) = 1 \\ q \sim Q }}
	\tau_{-a/q} \phi^{(s)},
\end{align}
where $\tau_{-a/q} \phi^{(s)}(\alpha) := \phi(\alpha-a/q)$ is translation by $a/q$, so that
\begin{align}
\label{eq:mollif:PhiSupport}
	\Supp( \Phi_{Q,s} ) \subset
	\bigsqcup_{\substack{ (a,q) = 1 \\ q \sim Q }} 
	\bigg( \frac{a}{q} + \frac{I_s}{2^s N} \bigg)		
\end{align}
We also define the functions
\begin{align}
\label{eq:mollif:rhoDef}
	\lambda = \sum_{Q \leq N_1} \sum_{Q \leq 2^{s} \leq N} \Phi_{Q,s},
	\qquad
	\rho = 1 - \lambda.
\end{align}

\begin{proposition}
\label{thm:mollif:rhoSupport}
We have $0 \leq \lambda, \rho \leq 1$ and 
\begin{align*}
	\lambda = 1,\ \rho = 0
	\quad\text{on}\quad
	\bigsqcup_{Q \leq N_1} \bigsqcup_{\substack{ (a,q) = 1 \\ q \sim Q}} 
	\bigg( \frac{a}{q} + \bigg[ \! -\frac{1}{ Q N }, \frac{1}{ Q N } \bigg] \bigg).
\end{align*}
\end{proposition}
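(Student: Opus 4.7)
The plan is to collapse the double sum defining $\lambda$ using the telescoping identity for the $\phi^{(s)}$, then exploit the disjointness of small arcs already recorded in the text preceding \eqref{eq:mollif:PhiDef}.

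First, for each fixed dyadic $Q \leq N_1$, I would apply \eqref{eq:mollif:PartUnity} to the inner sum on $s$, obtaining
\begin{align*}
\sum_{Q \leq 2^s \leq N} \Phi_{Q,s} = \sum_{\substack{(a,q)=1 \\ q \sim Q}} \tau_{-a/q}\,\kappa(QN\,\cdot\,),
\end{align*}
since translation commutes with summation. This reduces $\lambda$ to a clean sum of translated copies of $\kappa(QN\,\cdot\,)$ ranging over all reduced fractions $a/q$ with $q \sim Q$ and $Q \leq N_1$ dyadic.

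Next, I would invoke the disjointness of the intervals $\frac{a}{q} + [-\frac{2}{QN}, \frac{2}{QN}]$ stated before \eqref{eq:mollif:PhiDef}, which ensures that the supports of the translated bumps $\tau_{-a/q}\kappa(QN\,\cdot\,)$ are disjoint across all admissible triples $(a,q,Q)$, because $\kappa$ is supported in $[-2,2]$. Combined with $0 \leq \kappa \leq 1$, at most one summand is nonzero at any point, which immediately gives $0 \leq \lambda \leq 1$, and hence $0 \leq \rho \leq 1$ as well. For the pointwise identity on the set displayed in the statement, I would pick $\alpha = \frac{a}{q} + \beta$ satisfying the constraints with $|\beta| \leq \frac{1}{QN}$; then exactly one summand contributes and equals $\kappa(QN\beta) = 1$ by the normalization $[-1,1] \prec \kappa$, giving $\lambda(\alpha) = 1$ and $\rho(\alpha) = 0$.

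There is no substantive obstacle here; the argument is essentially an unwinding of definitions. The one point worth double-checking is that the telescoping in \eqref{eq:mollif:PartUnity} really outputs $\kappa(QN\,\cdot\,)$ at exactly the scale $QN$, which follows by inspection of \eqref{eq:mollif:phiDef} in both the intermediate ($1 \leq 2^s < \wt{N}$) and the terminal ($2^s = \wt{N}$) regimes, since the top bump $\kappa(\wt{N} N\,\cdot\,)$ is not subtracted off.
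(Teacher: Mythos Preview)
Your proof is correct and follows essentially the same route as the paper: collapse the sum over $s$ via~\eqref{eq:mollif:PartUnity} to rewrite $\lambda$ as a sum of translated bumps $\tau_{-a/q}\kappa(QN\,\cdot\,)$, then read off the claims from the support and localization properties of $\kappa$. The paper is terser---it stops at ``the proposition follows from the localization properties of $\kappa$''---whereas you spell out the disjointness argument for $0 \leq \lambda \leq 1$ and the identification of the single contributing summand on the specified arcs, but the underlying argument is the same.
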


\begin{proof}
By~\eqref{eq:mollif:PartUnity}, we can rewrite $\lambda$ as
\begin{align*}
	\lambda = \sum_{Q \leq N_1} \sum_{\substack{(a,q) = 1 \\ q \sim Q}}
	\tau_{-a/q} \bigg( \sum_{Q \leq 2^s \leq N} \phi^{(s)} \bigg)
	= \sum_{Q \leq N_1} \sum_{\substack{(a,q) = 1 \\ q \sim Q}} \tau_{-a/q} \kappa( Q N \,\cdot\,).
\end{align*}
The proposition follows from the localization properties of $\kappa$.
\end{proof}

At this stage we define the fundamental domain 
$\calU = ( \frac{1}{2N_1} , 1 + \frac{1}{2N_1}]$,
and we note that when $N$ is large, 
then for every $1 \leq a \leq q \leq Q \leq N_1$,
the intervals $\frac{a}{q} + [ - \frac{2}{ Q N } , \frac{2}{ Q N } ]$ are contained in
$\overset{\circ}{\calU}$.
Therefore for $1 \leq Q \leq 2^s \leq N$, the functions 
$\phi^{(s)}$, $\Phi_{Q,s}$ and $\lambda$ are supported on the interior of $\calU$,
and they may be viewed as smooth functions over the torus $\T$,
by $1$-periodization from the interval $\calU$.
We will view $\Phi_{Q,s}$ alternatively as a smooth function
on the torus $\T$ or on the real line, but note that for an integer $n$,
$\wh{\Phi_{Q,s}}(n)$ has the same definition under both points of view.

For $n \in \Z$ and an integer $Q \geq 1$ we define a truncated divisor function
\begin{align*}
	d(n,Q) = \sum_{\substack{ \ 1 \leq d \leq Q \,: \\ d|n }} 1.
\end{align*}
The following useful lemma is due to Bourgain~\cite{Bourgain:ParabI}.

\begin{lemma}
\label{thm:mollif:DivBound}
Let $\delta_x$ be the Dirac function at $x$. Then
\begin{align*}
	&\phantom{(n \in \Z)} &
	\wh{ \sum_{\substack{ (a,q) = 1 \\ q \sim Q}} \delta_{a/q} }(n)
	\lesssim Q \cdot d(n,2Q)
	&&(n \in \Z).
\end{align*}
\end{lemma}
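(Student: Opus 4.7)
The plan is to compute the Fourier transform explicitly as a sum of Ramanujan sums and then bound the sum by switching the order of summation over $q$ and over divisors.

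First I would observe that for $\delta_{a/q}$ viewed as a distribution on $\T$, we have $\wh{\delta_{a/q}}(n) = e(-an/q)$, so the quantity to estimate is
\begin{align*}
	T(n) := \sum_{q \sim Q} \sum_{\substack{1 \leq a \leq q \\ (a,q) = 1}} e(-an/q) = \sum_{q \sim Q} c_q(n),
\end{align*}
where $c_q(n)$ denotes the classical Ramanujan sum. The key input is the identity $c_q(n) = \sum_{d \mid \gcd(n,q)} d\,\mu(q/d)$, which yields the crude but sufficient bound
\begin{align*}
	|c_q(n)| \leq \sum_{d \mid \gcd(n,q)} d.
\end{align*}

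Next I would exchange the order of summation:
\begin{align*}
	|T(n)| \leq \sum_{q \sim Q} \sum_{\substack{d \mid n \\ d \mid q}} d = \sum_{\substack{d \mid n \\ d \leq 2Q}} d \cdot \#\{\, q \sim Q :\, d \mid q \,\}.
\end{align*}
Since $q$ ranges over a dyadic block $[Q, 2Q)$, the number of multiples of $d$ in this block is at most $\lfloor Q/d \rfloor + 1$. Splitting into the regimes $d \leq Q$ and $Q < d \leq 2Q$, this count is $\lesssim Q/d$ in the first case and $\leq 2$ in the second case, so in both cases we obtain a contribution bounded by $Q$ times the indicator of $\{d \mid n,\, d \leq 2Q\}$.

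Summing over the admissible divisors $d$ then gives
\begin{align*}
	|T(n)| \lesssim Q \sum_{\substack{d \mid n \\ d \leq 2Q}} 1 = Q \cdot d(n, 2Q),
\end{align*}
which is the desired bound. This argument is essentially routine; the only mild subtlety is handling the regime $Q < d \leq 2Q$ where the naive estimate $\#\{q \sim Q : d \mid q\} \leq Q/d$ is $< 1$, but one still has at most one or two multiples of $d$ in $[Q,2Q)$, and this contributes acceptably since $d \leq 2Q$. I do not expect any significant obstacle.
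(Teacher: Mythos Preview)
Your argument is correct. The paper does not give its own proof of this lemma, simply attributing it to Bourgain~\cite{Bourgain:ParabI}; your computation via Ramanujan sums $c_q(n)=\sum_{d\mid(n,q)} d\,\mu(q/d)$ followed by swapping the sums over $q$ and $d$ is exactly the standard route and goes through without issue (including the case $n=0$, where every $d\leq 2Q$ divides $n$ and $d(0,2Q)\asymp Q$).
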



\begin{proposition}
We have
\begin{align}
	\label{eq:mollif:PhiAverage}
	&\phantom{(m \in \Z)} &
	\int \Phi_{Q,s} \dm
	&\lesssim \frac{Q^2}{ 2^s N },
	&&
	\\
	\label{eq:mollif:PhiFourierBound}
	&\phantom{(n \in \Z)} &
	\wh{\Phi_{Q,s}}(n) 
	&\lesssim \frac{Q}{ 2^s N } d(n,2Q)
	&&(n \in \Z)
\end{align}
\end{proposition}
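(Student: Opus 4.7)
The plan is to reduce both inequalities to size/support properties of the building block $\phi^{(s)}$, together with the Ramanujan-type sum bound from Lemma~\ref{thm:mollif:DivBound}.

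For~\eqref{eq:mollif:PhiAverage}, I would start from the definition~\eqref{eq:mollif:PhiDef} and exploit translation invariance of the Lebesgue measure. Since $\phi^{(s)}$ is bounded by $1$ and supported in an interval of length $O(1/(2^s N))$ around the origin, one has $\int \phi^{(s)} \dm \lesssim 1/(2^s N)$. Summing the resulting bound over the $O(Q^2)$ coprime pairs $(a,q)$ with $q \sim Q$ and $1 \leq a \leq q$ (for each of the $\lesssim Q$ admissible $q$, the number of admissible $a$ is at most $q \leq 2Q$) then yields $\int \Phi_{Q,s} \dm \lesssim Q^2/(2^s N)$. The support intervals $\frac{a}{q} + \frac{1}{2^s N} I_s$ lie inside $\mathcal{U}$ by the discussion preceding the proposition, so the value of the integral is unambiguous on $\T$ or on $\R$.

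For~\eqref{eq:mollif:PhiFourierBound}, the key observation is that the Fourier transform at an integer $n$ turns the translation $\tau_{-a/q}$ into a modulation: $\wh{\tau_{-a/q} \phi^{(s)}}(n) = e(-na/q) \wh{\phi^{(s)}}(n)$. Summing this identity against the coprimality constraints gives the factorization
\begin{align*}
	\wh{\Phi_{Q,s}}(n)
	= \wh{\phi^{(s)}}(n) \cdot \sum_{\substack{(a,q) = 1 \\ q \sim Q}} e(-na/q)
	= \wh{\phi^{(s)}}(n) \cdot \wh{ \sum_{\substack{(a,q)=1 \\ q \sim Q}} \delta_{a/q}}(n).
\end{align*}
The first factor is controlled by $\|\phi^{(s)}\|_1 \lesssim 1/(2^s N)$, and the second factor is precisely what Lemma~\ref{thm:mollif:DivBound} bounds by $Q \cdot d(n, 2Q)$. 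Multiplying these two estimates produces the desired inequality.

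There is essentially no obstacle here beyond bookkeeping: the only point requiring a bit of care is confirming that translates of $\phi^{(s)}$ by $a/q$ remain inside the fundamental domain $\mathcal{U}$ (so that the real-line and torus pictures agree), which was already verified in the paragraph preceding the proposition since $2^s \geq Q$ forces $1/(2^s N) \leq 1/(QN)$. Consequently the entire argument amounts to a short computation, and no delicate estimate is needed beyond Lemma~\ref{thm:mollif:DivBound}.
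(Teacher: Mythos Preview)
Your proposal is correct and follows essentially the same approach as the paper: the paper writes $\Phi_{Q,s} = \big(\sum_{(a,q)=1,\, q\sim Q} \delta_{a/q}\big) \ast \gamma^{(s)}(2^s N\,\cdot\,)$ and takes the Fourier transform of the convolution, which is exactly your factorization via the modulation identity, and then applies Lemma~\ref{thm:mollif:DivBound} together with $|\wh{\phi^{(s)}}(n)| \lesssim (2^s N)^{-1}$. The only cosmetic difference is that the paper obtains~\eqref{eq:mollif:PhiAverage} as the special case $n=0$ of~\eqref{eq:mollif:PhiFourierBound} (using $d(0,2Q)\lesssim Q$), whereas you argue it directly by counting the $O(Q^2)$ translates.
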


\begin{proof}
Let $\gamma^{(s)} = \kappa - \kappa(2 \,\cdot\,)$ for $0 \leq s < \lfloor \log_2 N \rfloor$
and $\gamma^{(s)} = \kappa$ when $s = \lfloor \log_2 N \rfloor$.
By~\eqref{eq:mollif:phiDef} and~\eqref{eq:mollif:PhiDef}, 
we can write 
\begin{align*}
	\Phi_{Q,s}	
	= \sum_{\substack{(a,q)=1 \\ q \sim Q}} \tau_{-a/q} \gamma^{(s)}(2^s N \,\cdot\, )
	= \bigg( \sum_{\substack{(a,q)=1 \\ q \sim Q}} \delta_{a/q} \bigg) 
	\ast  \gamma^{(s)}(2^s N \,\cdot\, ).
\end{align*}
By Lemma~\ref{thm:mollif:DivBound}, we deduce the pointwise bound
\begin{align*}
	|\wh{\Phi_{Q,s}}(n)|
	=  \bigg| \wh{\sum_{\substack{(a,q)=1 \\ q \sim Q}} \delta_{a/q}}(n)
	\cdot \frac{1}{ 2^s N } \wh{\gamma^{(s)}}\Big( \frac{n}{2^s N} \Big)  \bigg|
	\lesssim \frac{Q}{ 2^s N } d(n,2Q),
\end{align*}
which is uniform in $n \in \Z$.
When $n = 0$ the left-hand side is $\int \Phi_{Q,s} \dm$.
\end{proof}

\begin{proposition}
For every $\eps > 0$ and $A > 0$, we have
\begin{align}
	\label{eq:mollif:rhoAverage}
	\int \rho \,\dm &\asymp 1,
	\\
	\label{eq:mollif:rhoFourierBound}
	\wh{\rho}(n) &\lesssim_{\eps,A} \frac{1}{N^{1-\eps}}
	\quad\text{for $0 < |n| \leq A N^A$.}
\end{align}
\end{proposition}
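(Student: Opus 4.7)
The plan is to decompose $\rho = 1 - \lambda$ and derive both claims from the estimates \eqref{eq:mollif:PhiAverage} and \eqref{eq:mollif:PhiFourierBound} by dyadic summation in $Q$ and $s$. Since $\wh{1}(n) = 0$ for $n \neq 0$, one has $\wh{\rho}(n) = -\wh{\lambda}(n)$, so the Fourier bound reduces to estimating $\wh{\lambda}(n)$. The upper bound $\int \rho \dm \leq 1$ is immediate from $0 \leq \rho \leq 1$ (Proposition~\ref{thm:mollif:rhoSupport}) and $|\calU| = 1$, so only the matching lower bound requires work, and that too reduces to an upper bound on $\int \lambda \dm$.

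For the first estimate, I would sum \eqref{eq:mollif:PhiAverage}: the inner geometric series over dyadic $2^s \in [Q, N]$ contributes $O(1/Q)$, leaving a sum in $Q$ dominated by the largest term,
\begin{align*}
	\int \lambda \dm
	\lesssim \sum_{Q \leq N_1} \sum_{Q \leq 2^s \leq N} \frac{Q^2}{2^s N}
	\lesssim \sum_{Q \leq N_1} \frac{Q}{N}
	\lesssim \frac{N_1}{N} = c_1.
\end{align*}
Choosing $c_1$ sufficiently small from the outset ensures $\int \lambda \dm \leq 1/2$, hence $\int \rho \dm \geq 1/2$, as required.

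For the Fourier estimate, I would apply \eqref{eq:mollif:PhiFourierBound} in the same fashion:
\begin{align*}
	|\wh{\rho}(n)|
	\leq \sum_{Q \leq N_1} \sum_{Q \leq 2^s \leq N} \frac{Q \, d(n, 2Q)}{2^s N}
	\lesssim \frac{\log N}{N} \max_{Q \leq N_1} d(n, 2Q),
\end{align*}
where the geometric sum in $s$ cancels the $Q$ in the numerator and the dyadic sum in $Q$ contributes only a logarithm. The classical divisor bound $d(n, 2Q) \leq \sum_{d \mid n} 1 \lesssim_\delta |n|^\delta$, applied with $\delta = \eps/(2A)$, then gives $d(n, 2Q) \lesssim_{\eps, A} N^{\eps/2}$ uniformly on $0 < |n| \leq A N^A$, and absorbing $\log N$ into a further $N^{\eps/2}$ yields the claimed $N^{-1+\eps}$.

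The calculation is routine; the only conceptual point worth flagging is that the coupling $Q \leq N_1 = c_1 N$ (rather than $Q \leq N$) is exactly what makes $\int \lambda \dm$ strictly less than $1$ after shrinking $c_1$, so this calibration between the range of denominators and the smoothing scale is the whole design feature of $\lambda$. The divisor estimate is the other essential ingredient, but it is classical.
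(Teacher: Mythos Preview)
Your proof is correct and follows exactly the paper's approach: both arguments use~\eqref{eq:mollif:PhiAverage} and~\eqref{eq:mollif:PhiFourierBound}, sum the double dyadic series in $Q$ and $s$, and invoke the classical divisor bound $d(n,Q)\leq d(n)\lesssim_\eps |n|^\eps$; the paper in fact only sketches the Fourier estimate (``derived\dots in a similar fashion''), and your write-up fills in precisely those details.
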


\begin{proof}
From~\eqref{eq:mollif:rhoDef} and~\eqref{eq:mollif:PhiAverage},
it follows that
\begin{align*}
	\int \rho \,\dm
	&= 1 - O\bigg(  \sum_{Q \leq N_1} \sum_{Q \leq 2^s \leq N} \frac{Q^2}{ 2^s N } \bigg)
	\\
	&= 1 - O\bigg( \frac{1}{N} \sum_{Q \leq N_1} Q \bigg)
	\\
	&= 1 - O\Big( \frac{N_1}{N} \Big).
\end{align*}
Since we have chosen $N_1 = c_0 N$ with $c_1$ small enough,
we have $\int \rho \dm \asymp 1$ as desired.
The bound on $\wh{\rho}$ is derived from~\eqref{eq:mollif:PhiFourierBound} 
in a similar fashion, using also the standard
divisor bound $d(n,Q) \leq d(n) \lesssim_\eps n^\eps$.
\end{proof}

\section{Restriction estimates}
\label{sec:qf}

We fix a non-degenerate integer quadratic form $R$ in $d$ variables.
In this section, 
we derive Theorem~\ref{thm:intro:TruncRestrQuad} 
from the introduction.
Note that the system of polynomials $(R(\bfx),\bfx)$
has total degree $d + 2$, hence the critical exponent in the
definite case is $p_d = \frac{2(d+2)}{d}$.
This is the exponent that arises in our argument,
even in the indefinite case, 
due to our use of $d$-dimensional exponential sum estimates
which do not depend on the type of quadratic form.
The larger critical exponent $p_{d,s} = \frac{2(d - s + 2)}{d - s}$
of Theorem~\ref{thm:intro:RestrHyperbParab} accounts
for the existence of a special linear subvariety of~\eqref{eq:prelims:QuadSurface},
but this does not influence our treatment of the truncated moment in 
Theorem~\ref{thm:intro:TruncRestrQuad}.

%
%

We use a smooth weight function $\omega : \R^d \rightarrow [0,1]$ of the form
\begin{align}
\label{eq:prelims:Weight}
	\omega = \eta\Big( \frac{\,\cdot\,}{N} \Big),
	\qquad
	\text{$\eta$ Schwarz function such that}\
	[-1,1]^d \prec \eta \prec [-2,2]^d.
\end{align}
Given a sequence $a : \Z^d \rightarrow \C$ 
supported on $[-N,N]^d$ with $\| a \|_2 = 1$
and a weight function $\omega : \Z^d \rightarrow [0,1]$
of the form~\eqref{eq:prelims:Weight},
we define
\begin{align}
	\label{eq:prelims:FaDef}
	F_a( \alpha, \bftheta )
	&= \sum_{\bfn \in \Z^d} a(\bfn) e( \alpha R(\bfn) + \bftheta \cdot \bfn )
	&&(\alpha \in \T, \bftheta \in \T^d),
	\\
	\label{eq:prelims:FDef}
	F(\alpha,\bftheta)
	&= \sum_{\bfn \in \Z^d} \omega(\bfn) e( \alpha R(\bfn) + \bftheta \cdot \bfn )
	&&(\alpha \in \T, \bftheta \in \T^d),
\end{align}
which are the extension operator of our surface $S$ acting on the sequence $a$ 
and the $\omega$-smoothed Fourier transform of the counting measure on $S$, 
respectively.

We will quote the estimates of Section~\ref{sec:mollif} extensively.
Via the Tomas-Stein argument in Section~\ref{sec:theorem}, we will devote most of our attention
to the complete exponential sum~\eqref{eq:prelims:FDef}.
The minor arc estimates of Appendix~\ref{sec:appqf}
yield the following in our context.

\begin{proposition}
\label{thm:qf:MinorArcBound}
Uniformly in $\alpha \in \T$, $\bftheta \in \T^d$, we have
\begin{align*}
	\rho(\alpha) \neq 0
	\quad\Rightarrow\quad	
	|F(\alpha,\bftheta)| &\lesssim N^{d/2}.
\end{align*}
\end{proposition}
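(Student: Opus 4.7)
The plan is to translate the hypothesis $\rho(\alpha) \neq 0$ into a quantitative Diophantine condition on $\alpha$, and then to invoke the multidimensional minor-arc exponential sum estimates for quadratic forms from Appendix~\ref{sec:appqf} to extract square-root cancellation uniformly in $\bftheta$.

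First I would unpack the support of $\rho = 1 - \lambda$ using Proposition~\ref{thm:mollif:rhoSupport}: if $\rho(\alpha) \neq 0$ then $\alpha$ avoids every arc $a/q + [-(QN)^{-1}, (QN)^{-1}]$ with $(a,q) = 1$, $q \sim Q$, and $Q \leq N_1$ dyadic. Since every integer $1 \leq q < 2N_1$ sits in a unique such dyadic window with $Q \asymp q$, this is equivalent to the lower bound
\begin{align*}
	\Big| \alpha - \frac{a}{q} \Big| \gtrsim \frac{1}{qN}
	\qquad \text{for all $1 \leq q < 2N_1$ and all $(a,q) = 1$.}
\end{align*}
By Dirichlet's approximation theorem with parameter $N$ there exist coprime $a, q$ with $q \leq N$ and $|\alpha - a/q| \leq (qN)^{-1}$; the display above then forces either $q \geq 2N_1 \asymp N$, or else $q < 2N_1$ with $|\alpha - a/q| \asymp (qN)^{-1}$. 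Either way $\alpha$ is a bona fide minor arc in the Hardy--Littlewood decomposition of height $N$.

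Next I would apply the minor-arc estimate from Appendix~\ref{sec:appqf} to the complete exponential sum $F(\alpha, \bftheta)$. The essential structural remark is that $\bftheta$ contributes nothing to the Diophantine analysis: a single Weyl differencing step, expanding $|F|^2$ and setting $\bfn = \bfn' + \bfk$, replaces the quadratic phase by the bilinear form $2 \alpha \bfk^\transp M \bfn'$ in the inner variable (where $M$ is the symmetric integer matrix of $R$) and collapses the linear phase $\bftheta \cdot (\bfn - \bfn')$ to $\bftheta \cdot \bfk$, which is constant in $\bfn'$. Since $M$ is non-degenerate, the inner sum is a multidimensional linear exponential sum whose frequencies are driven entirely by $\alpha$; handling it coordinate-wise and summing the classical bound over $\bfk$ yields
\begin{align*}
	|F(\alpha, \bftheta)| \lesssim N^{d/2}
\end{align*}
uniformly in $\bftheta$, as claimed.

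The main obstacle is bookkeeping: checking that the quantitative Diophantine condition extracted from $\rho(\alpha) \neq 0$ matches the hypothesis required by the appendix, and in particular that the intermediate Weyl bounds are sharp enough to deliver a clean $N^{d/2}$ without a residual $N^\eps$ factor. For indefinite quadratic forms with integer matrix this is classical, and the fact that $q$ can be taken of order $N$ precisely absorbs any divisor-sum loss that a naive Weyl argument would leave behind.
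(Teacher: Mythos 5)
Your first step is fine: Proposition~\ref{thm:mollif:rhoSupport} does show that $\rho(\alpha)\neq 0$ forces $|\alpha-a/q|>\frac{1}{QN}\geq\frac{1}{qN}$ for every reduced fraction with $q<2N_1$, and combining this with Dirichlet's theorem one finds a reduced approximation $|\alpha-a_0/q_0|\leq\frac{1}{q_0(N+1)}$ whose denominator must satisfy $q_0\gtrsim N$ (your case with $q<2N_1$ is in fact vacuous for the Dirichlet fraction). The gap is in the last step, where you claim that Weyl differencing followed by ``summing the classical bound over $\bfk$'' yields the clean estimate $|F|\lesssim N^{d/2}$, and that taking $q_0\asymp N$ ``absorbs any divisor-sum loss''. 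It does not: the classical coordinate-wise bounds are of the shape $\min(N,\|(2\alpha M\bfk)_i\|^{-1})$, and their sum over $|\bfk|\lesssim N$ is of order $N^{d}\log N$ rather than $N^{d}$ (already for $d=1$, $R(x)=x^2$ and $q_0\asymp N$ one has $\sum_{|k|\leq N}\min(N,\|2\alpha k\|^{-1})\asymp N\log N$), so this route only gives $|F|\lesssim N^{d/2}(\log N)^{O(1)}$; invoking instead the major-arc bound of Proposition~\ref{thm:appqf:MajorArcBound} costs a factor $q^{\eps}$, i.e.\ $N^{d/2+\eps}$. The $\eps$-free constant is not cosmetic here: it is exactly what allows the term $\|F_\frakm\|_\infty|E_\lambda|^2$ to be absorbed for $\lambda\geq CN^{d/4}$ in Proposition~\ref{thm:qf:epsFreeLevelSetEst}, hence the sharp threshold $CN^{d/4}\|a\|_2$ in Theorem~\ref{thm:intro:TruncRestrQuad}; with your losses the threshold would degrade to $N^{d/4+\eps}$.

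The paper avoids this by running the argument in the inverse direction: Proposition~\ref{thm:appqf:MinorArcBound} says that $|F(\alpha,\bftheta)|\geq CN^{d/2}$ already produces a reduced fraction with $q\leq c_1N$ and $|\alpha-\frac{a}{q}|\leq\frac{c_1}{qN}$, which lies in the set where $\rho$ vanishes by Proposition~\ref{thm:mollif:rhoSupport}; the statement is then a two-line contrapositive. The point of the inverse formulation is that its proof never sums $\min(N,\|\cdot\|^{-1})$: after differencing, the smooth weight gives the joint decay $(1+N\|2\alpha M\bfu\|)^{-(d+1)}$, and one only has to count the $\bfu$ with $|\bfu|\lesssim N$ and $\|2\alpha M\bfu\|$ small, which yields either the clean bound $N^{d/2}$ (if only $\bfu=0$ survives) or the rational approximation (if some $\bfu\neq 0$ survives) --- no divisor sums or logarithms ever appear. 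If you wish to keep your forward set-up, you must replace the classical coordinate-wise summation by this counting argument, with constants matched to $N_1=c_1N$ so that the denominators produced land in the vanishing set of $\rho$; at that point your proof is the paper's proof read backwards.
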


\begin{proof}
We prove the contrapositive.
If $|F(\alpha,\bftheta)| \geq C_1 N^{d/2}$
for a large enough $C_1 > 0$, then by Proposition~\ref{thm:appqf:MinorArcBound}
there exist $a,q \in \Z$ such that $| \alpha - \frac{a}{q} | \leq \frac{c_1}{q N}$,
$1 \leq q \leq c_1 N$ and $(a,q) = 1$.
Consequently there exists a dyadic integer $Q$ such that $q \sim Q \Rightarrow Q \leq c_1 N = N_1$
and $|\alpha - \frac{a}{q}| \leq \frac{1}{QN}$,
so that $\rho(\alpha) = 0$ by Proposition~\ref{thm:mollif:rhoSupport}.
\end{proof}

For each dyadic integer $Q \geq 1$ and each integer $s \geq 0$
such that $1 \leq Q \leq 2^s$, we define a
piece of our original exponential sum by
\begin{align}
\label{eq:qf:FQsDef}
	F_{Q,s}(\alpha,\bftheta)
	= F(\alpha,\bftheta) \Big[ \Phi_{Q,s}(\alpha) - \frac{\int \Phi_{Q,s}}{\int \rho} \rho(\alpha) \Big].
\end{align}
We establish physical and Fourier bounds
for the exponential sums $F_{Q,s}$
via the major and minor arc estimates of Appendix~\ref{sec:appqf}.
It turns out to be important to have square-root cancellation
of the exponential sum $F$ on the minor arcs.
We also introduce a technical device to 
ensure that the Fourier transforms under consideration stay inside 
an $N^2 \times N \times \dots \times N$ box,
a fact that will prove useful later on.
Specifically, we fix a trigonometric polynomial $\psi_N$ on $\T^{d + 1}$ such that,
for a constant $C_R$ large enough with respect to $R$, 
\begin{align*}
	[-C_R N^2,C_R N^2] \times [-2N,2N]^d  \prec \wh{\psi}_N 
	\prec [-2 C_R N^2,2C_R N^2] \times [-4N,4N]^d,
\end{align*}
which in particular implies that $\int_{\T^{d + 1}} \psi_N = 1$.
When $H : \T^{d+1} \rightarrow \C$ is a bounded measurable function, 
we write $\dot{H} = H \ast \psi_N$ for brevity; 
note that $\| \dot{H} \|_p \leq \| H \|_p$ for any $p \geq 1$ by Young's inequality, 
and that $F = \dot{F}$ by Fourier inversion 
(since $\wh{F}$ is supported on the surface~\eqref{eq:prelims:QuadSurface}).

\begin{proposition}
\label{thm:qf:ArcPiecePhysFourierBounds}
Uniformly for $(m,\bfell) \in \Z^{d+1}$, we have 
\begin{align*}
	\| \dot{F}_{Q,s} \|_\infty 
	&\lesssim_\eps \bigg( \frac{2^s N}{Q} \bigg)^{\frac{d}{2}} Q^\eps,
	\\
	\wh{\dot{F}_{Q,s}}(m,\bfell)
	&\lesssim_\eps 
	1_{ |m| \lesssim N^2,\ |\bfell| \lesssim N }
	\Big( \frac{Q}{2^s N} d(m - R(\bfell),2Q) + \frac{Q^2}{ 2^s N^{2 - \eps} } \Big).
\end{align*}
\end{proposition}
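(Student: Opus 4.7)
The plan is to split the proposition into its two bounds and reduce each to information already available: the minor arc bound from Proposition~\ref{thm:qf:MinorArcBound}, the companion major arc bound from Appendix~\ref{sec:appqf}, and the mollifier estimates of Section~\ref{sec:mollif}. Since $\psi_N$ may be chosen as a Fej\'er-like kernel with $\|\psi_N\|_1 = O(1)$, convolution with $\psi_N$ preserves $L^\infty$ bounds up to constants; on the Fourier side it multiplies $\wh{F_{Q,s}}$ by $\wh{\psi_N}$, which contributes exactly the indicator $1_{|m|\lesssim N^2,\,|\bfell|\lesssim N}$.

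For the $L^\infty$ bound I would use the pointwise triangle inequality
\[
|F_{Q,s}(\alpha,\bftheta)|
\leq |F(\alpha,\bftheta)|\,|\Phi_{Q,s}(\alpha)|
+ \frac{\int\Phi_{Q,s}\dm}{\int\rho\dm}\,|F(\alpha,\bftheta)|\,\rho(\alpha).
\]
On $\Supp(\Phi_{Q,s})$, by~\eqref{eq:mollif:PhiSupport} the variable $\alpha$ lies on a major arc with denominator $q \sim Q$ at scale $|\alpha - a/q| \lesssim 1/(2^s N)$, and the appendix's major arc estimate yields $|F| \lesssim_\eps (2^s N/Q)^{d/2} Q^\eps$. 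On $\Supp(\rho)$, Proposition~\ref{thm:qf:MinorArcBound} gives $|F| \lesssim N^{d/2}$. Combining these with $\|\Phi_{Q,s}\|_\infty \lesssim 1$ and $\int\Phi_{Q,s}/\int\rho \lesssim Q^2/(2^s N)$ from~\eqref{eq:mollif:PhiAverage}--\eqref{eq:mollif:rhoAverage}, the second contribution is bounded by $Q^2 N^{d/2-1}/2^s$, which is dominated by $(2^s N/Q)^{d/2} Q^\eps$ throughout the range $Q \leq 2^s \leq N$.

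For the Fourier bound I would compute $\wh{F_{Q,s}}$ directly on $\T^{d+1}$. Expanding $F$ as a sum over $\bfn$ and using orthogonality in the $\bftheta$-variable collapses the $\bftheta$-integral to $1_{\bfn=\bfell}$, whence
\[
\wh{F_{Q,s}}(m,\bfell)
= \omega(\bfell)\bigl[\wh{\Phi_{Q,s}}(m - R(\bfell))
- c\,\wh{\rho}(m - R(\bfell))\bigr],
\quad c = \frac{\int\Phi_{Q,s}\dm}{\int\rho\dm}.
\]
At $n := m - R(\bfell) = 0$ the two terms cancel exactly by the choice of $c$, since $\wh{\Phi_{Q,s}}(0) = \int\Phi_{Q,s}\dm$ and $\wh{\rho}(0) = \int\rho\dm$. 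For $n \neq 0$, applying~\eqref{eq:mollif:PhiFourierBound} and~\eqref{eq:mollif:rhoFourierBound} together with $c \lesssim Q^2/(2^s N)$ produces the claimed two-term bound; the hypothesis $|n| \leq AN^A$ for~\eqref{eq:mollif:rhoFourierBound} is secured by taking $A=2$, since $|\bfell| \lesssim N$ forces $|R(\bfell)| \lesssim N^2$ and $|m| \lesssim N^2$ holds on $\Supp\wh{\psi_N}$. Multiplying by $\wh{\psi_N}$ attaches the indicator $1_{|m|\lesssim N^2,\,|\bfell|\lesssim N}$.

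The only delicate point is organizational: verifying that the hypotheses of the appendix's major arc bound are met on $\Supp(\Phi_{Q,s})$ (this is immediate from the disjointness discussion preceding~\eqref{eq:mollif:PhiDef}) and that the localization supplied by $\wh{\psi_N}$ is compatible with the polynomial range in~\eqref{eq:mollif:rhoFourierBound}. No new ideas are required beyond those already deployed in Section~\ref{sec:mollif} and the appendix.
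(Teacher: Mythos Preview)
Your proposal is correct and follows essentially the same approach as the paper's proof: the $L^\infty$ bound via the major arc estimate on $\Supp(\Phi_{Q,s})$ together with the minor arc bound on $\{\rho\neq 0\}$, and the Fourier bound via direct computation $\wh{F_{Q,s}}(m,\bfell)=\omega(\bfell)\wh{\Psi_{Q,s}}(m-R(\bfell))$ with $\Psi_{Q,s}=\Phi_{Q,s}-c\rho$, exploiting $\wh{\Psi_{Q,s}}(0)=0$ and the estimates~\eqref{eq:mollif:PhiFourierBound},~\eqref{eq:mollif:rhoFourierBound}. Your explicit check that $|m-R(\bfell)|$ lies in the polynomial range required by~\eqref{eq:mollif:rhoFourierBound} is a detail the paper leaves implicit.
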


\begin{proof}
When $\Phi_{Q,s}(\alpha) \neq 0$,
it follows from~\eqref{eq:mollif:PhiSupport}
that there exist $a,q \in \Z$
such that $q \sim Q$, $(a,q) = 1$ and
$|\alpha - \frac{a}{q}| \asymp \frac{1}{2^s N}$ if $2^s < \wt{N}$,
$|\alpha - \frac{a}{q}| \leq \frac{2}{2^s N}$ is $2^s = \wt{N}$.
By Propositions~\ref{thm:appqf:MajorArcBound} and~\ref{thm:qf:MinorArcBound},
and by~\eqref{eq:mollif:PhiAverage} and~\eqref{eq:mollif:rhoAverage}, it follows that,
uniformly in $\bftheta \in \R^d$,
\begin{align*}
	|F_{Q,s}(\alpha,\bftheta)| 
	&\lesssim_\eps Q^{-\frac{d}{2} + \eps} (2^s N)^{\frac{d}{2}} 
	+ \frac{Q^2}{2^s N} N^{\frac{d}{2}}
	\\
	&= \Big( \frac{2^s}{Q} \Big)^{\frac{d}{2}} Q^\eps N^\frac{d}{2}
	+ \frac{Q}{2^s} \cdot \frac{Q}{N} N^{\frac{d}{2}}
	\\
	&\leq \Big( \frac{2^s N}{Q} \Big)^\frac{d}{2} (1+Q^\eps).
\end{align*}
We let $\Psi_{Q,s} = \Phi_{Q,s} - \frac{\int \Phi_{Q,s}}{\int \rho} \rho$ and note that $\int \Psi
_{Q,s} = 0$ for each $Q,s$. 
Next we observe that, for any $(m,\bfell) \in \Z^{d+1}$,
\begin{align*}
	\wh{F_{Q,s}}(m,\bfell)
	&= \int_{\T^{d+1}} \Psi_{Q,s}(\alpha) F(\alpha,\bftheta) e( - \alpha m - \bftheta \cdot \bfell\, ) \dalpha \dbftheta
	\\
	&= \sum_{\bfn \in \Z^d} \omega(\bfn) 
	\int_{\T^{d+1}} \Psi_{Q,s}(\alpha) e\big( \alpha( R(\bfn) - m ) + \bftheta \cdot ( \bfn - \bfell ) \big) \dalpha \dbftheta
	\\
	&= \omega(\bfell) \wh{\Psi}_{Q,s}( m - R(\bfell) ).
\end{align*}
The second bound of the proposition then follows from the identity
\begin{align*}
	\wh{\dot{F}_{Q,s}}(m,\bfell)		
	= \psi_N(m,\bfell) \omega(\bfell) \wh{\Psi}_{Q,s}( m - R(\bfell) ) 1_{m \neq R(\bfell)},
\end{align*}
and the estimates~\eqref{eq:mollif:PhiAverage},~\eqref{eq:mollif:PhiFourierBound},
\eqref{eq:mollif:rhoAverage} and~\eqref{eq:mollif:rhoFourierBound}.
\end{proof}

We now define minor and major arc pieces of our exponential sum by
\begin{align}
\label{eq:qf:FDcp}
	F_\frakM = \sum_{Q \leq N_1} \sum_{Q \leq 2^s \leq N} F_{Q,s},
	\qquad
	F_\frakm = F - F_\frakM.
\end{align}
We can readily derive a uniform bound on the minor arc piece $F_\frakm$,
as an immediate consequence of 
the definition~\eqref{eq:mollif:rhoDef} and Proposition~\ref{thm:qf:MinorArcBound}.

\begin{proposition}
\label{thm:qf:MinorArcPieceBound}
We have $\| F_\frakm \|_\infty \lesssim N^{d/2}$.
\end{proposition}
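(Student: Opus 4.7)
The plan is to exploit an algebraic cancellation: once the subtracted $\rho$-terms from each $F_{Q,s}$ are collected, $F_\frakm$ collapses to $F$ localized on the support of $\rho$, and the bound becomes an immediate consequence of the minor arc estimate in Proposition~\ref{thm:qf:MinorArcBound}.

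Concretely, I would first substitute the definition~\eqref{eq:qf:FQsDef} of $F_{Q,s}$ into the definition~\eqref{eq:qf:FDcp} of $F_\frakM$, factoring out $F$ (which does not depend on $(Q,s)$) and recognizing the sum $\sum_{Q \leq N_1} \sum_{Q \leq 2^s \leq N} \Phi_{Q,s}$ as $\lambda$ by~\eqref{eq:mollif:rhoDef}. This yields
\[
F_\frakM = F \cdot \Bigl[\, \lambda - \frac{\int \lambda}{\int \rho}\, \rho \,\Bigr].
\]
Subtracting from $F$ and using the two elementary identities $1 - \lambda = \rho$ and $\int \lambda + \int \rho = 1$ (the latter because $\lambda + \rho \equiv 1$ is integrated against normalized Lebesgue measure on $\T$), one finds after combining the $\rho$-terms
\[
F_\frakm = F \cdot \rho \cdot \Bigl[\, 1 + \frac{\int \lambda}{\int \rho} \,\Bigr] = \frac{F \cdot \rho}{\int \rho}.
\]

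With this identity in hand, the proof essentially finishes itself: whenever $\rho(\alpha) \neq 0$, Proposition~\ref{thm:qf:MinorArcBound} gives $|F(\alpha,\bftheta)| \lesssim N^{d/2}$ uniformly in $\bftheta$; combining this with $0 \leq \rho \leq 1$ and the lower bound $\int \rho \asymp 1$ from~\eqref{eq:mollif:rhoAverage} closes the argument. I do not anticipate any genuine obstacle here, since all the technical work has been carried out beforehand --- first in Section~\ref{sec:mollif}, where the partition of unity $\lambda + \rho = 1$ and the estimate $\int \rho \asymp 1$ are established, and then in Appendix~\ref{sec:appqf}, whose minor arc bound underpins Proposition~\ref{thm:qf:MinorArcBound}. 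The only bookkeeping to watch is that the index set $\{(Q,s) : Q \leq N_1,\ Q \leq 2^s \leq N\}$ appearing in~\eqref{eq:qf:FDcp} exactly matches the one defining $\lambda$ in~\eqref{eq:mollif:rhoDef}, which it does by construction.
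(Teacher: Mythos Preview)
Your proof is correct and is precisely the argument the paper has in mind: the paper simply declares the proposition ``an immediate consequence of the definition~\eqref{eq:mollif:rhoDef} and Proposition~\ref{thm:qf:MinorArcBound}'', and you have faithfully unpacked that immediacy by deriving the identity $F_\frakm = (\int\rho)^{-1} F\rho$ and then invoking the minor arc bound on the support of $\rho$.
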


The previous propositions also imply simple norm estimates for the operator of convolution with a major arc piece.

\begin{proposition}
\label{thm:qf:ConvolEasyBounds}
We have
\begin{align}
	\label{eq:qf:L1LinftyBound}
	\| \dot{F}_{Q,s} \ast f \|_\infty 
	&\lesssim \Big( \frac{2^s N}{Q} \Big)^\frac{d}{2} Q^\eps \| f \|_1,
	\\
	\label{eq:qf:L2L2EasyBound}
	\| \dot{F}_{Q,s} \ast f \|_2
	&\lesssim_\eps \frac{Q}{ 2^s N^{1-\eps} } \| f \|_2. 
\end{align}
\end{proposition}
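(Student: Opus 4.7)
The plan is to derive both bounds as direct consequences of Proposition~\ref{thm:qf:ArcPiecePhysFourierBounds}, using Young's inequality in physical space for the $L^1 \to L^\infty$ estimate and Plancherel in frequency space for the $L^2 \to L^2$ estimate.

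For the first bound, I would simply apply Young's inequality:
\begin{align*}
	\| \dot{F}_{Q,s} \ast f \|_\infty
	\leq \| \dot{F}_{Q,s} \|_\infty \| f \|_1,
\end{align*}
and invoke the uniform physical bound $\| \dot{F}_{Q,s} \|_\infty \lesssim_\eps (2^s N/Q)^{d/2} Q^\eps$ from Proposition~\ref{thm:qf:ArcPiecePhysFourierBounds}.

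For the second bound, I would use Plancherel on $\T^{d+1}$: since $\wh{\dot{F}_{Q,s} \ast f} = \wh{\dot{F}_{Q,s}} \cdot \wh{f}$, we have
\begin{align*}
	\| \dot{F}_{Q,s} \ast f \|_2 \leq \| \wh{\dot{F}_{Q,s}} \|_\infty \| f \|_2,
\end{align*}
so everything reduces to bounding $\| \wh{\dot{F}_{Q,s}} \|_\infty$ by $Q/(2^s N^{1-\eps})$. Using Proposition~\ref{thm:qf:ArcPiecePhysFourierBounds}, the nonzero values of $\wh{\dot{F}_{Q,s}}(m,\bfell)$ occur for $|m| \lesssim N^2$, $|\bfell| \lesssim N$, and $m \neq R(\bfell)$. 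In that regime the integer $m - R(\bfell)$ is nonzero and bounded in absolute value by $O(N^2)$, so the standard divisor bound gives $d(m - R(\bfell), 2Q) \leq d(m - R(\bfell)) \lesssim_\eps N^\eps$. The first term in the Fourier estimate becomes $\frac{Q}{2^s N} \cdot N^\eps$, which matches the target. For the second term, using $Q \leq N_1 \leq N$, we have $\frac{Q^2}{2^s N^{2-\eps}} = \frac{Q}{2^s N} \cdot \frac{Q}{N^{1-\eps}} \lesssim \frac{Q}{2^s N} \cdot N^\eps$, also within budget. Taking the supremum then yields $\| \wh{\dot{F}_{Q,s}} \|_\infty \lesssim_\eps \frac{Q}{2^s N^{1-\eps}}$.

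Both steps are essentially mechanical applications of standard norm inequalities combined with Proposition~\ref{thm:qf:ArcPiecePhysFourierBounds}. The only point requiring any care is verifying that the divisor factor is harmless: one must notice that the factor $1_{m \neq R(\bfell)}$ (implicit in the Fourier bound, stemming from the mean-zero property of $\Psi_{Q,s}$) ensures the argument of $d(\cdot, 2Q)$ is a nonzero integer of size $O(N^2)$, so the usual $n^\eps$ divisor bound applies. Beyond this observation, there is no real obstacle.
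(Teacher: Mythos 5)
Your proposal is correct and follows essentially the same route as the paper: apply $\| W \ast f \|_\infty \leq \| W \|_\infty \| f \|_1$ and $\| W \ast f \|_2 \leq \| \wh{W} \|_\infty \| f \|_2$ with $W = \dot{F}_{Q,s}$, then insert the physical and Fourier bounds of Proposition~\ref{thm:qf:ArcPiecePhysFourierBounds} together with the divisor bound $d(n,2Q) \lesssim_\eps n^\eps$. Your added remark that $m - R(\bfell)$ is a nonzero integer of size $O(N^2)$ is exactly the point the paper leaves implicit.
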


\begin{proof}
Note that for any bounded function $W : \T^{d+1} \rightarrow \C$, we have
\begin{align*}
	\| W \ast f \|_\infty \leq \| W \|_\infty \| f \|_1,
	\qquad
	\| W \ast f \|_2 = \| \wh{W} \wh{f} \|_2 \leq \| \wh{W} \|_\infty \| f \|_2.
\end{align*}
It now suffices to apply these inequalities with $W = F_{Q,s}$ and
insert the estimates of 
Proposition~\ref{thm:qf:ArcPiecePhysFourierBounds}
(using also the bound $d(n,2Q) \lesssim n^\eps$).
\end{proof}

By interpolation, we can obtain an estimate for all moments.

\begin{proposition}
\label{thm:qf:InterpolEasyGenBound}
Let $p'_0 = \frac{2(d+2)}{d}$. For any $p' \in [2,\infty)$, we have
\begin{align}
\label{eq:qf:InterpolEasyGenBound}
	\| \dot{F}_{Q,s} \ast f \|_{p'}
	\lesssim \Big( \frac{2^s N}{Q} \Big)^{ (d+2) (\frac{1}{p'_0} - \frac{1}{p'}) } N^\eps \| f \|_p.
\end{align}
\end{proposition}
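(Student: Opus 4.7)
The plan is to apply Riesz--Thorin interpolation to the convolution operator $T_{Q,s} : f \mapsto \dot{F}_{Q,s} \ast f$ between the two endpoint bounds just established in Proposition~\ref{thm:qf:ConvolEasyBounds}: the $L^1 \to L^\infty$ estimate \eqref{eq:qf:L1LinftyBound} with norm $M_1 \lesssim (2^s N/Q)^{d/2} Q^\eps$, and the $L^2 \to L^2$ estimate \eqref{eq:qf:L2L2EasyBound} with norm $M_2 \lesssim Q/(2^s N^{1-\eps})$. For $p' \in [2,\infty)$ with dual exponent $p$, I choose the interpolation parameter $\theta = 2/p'$, so that $\tfrac{1}{p'} = \tfrac{1-\theta}{\infty} + \tfrac{\theta}{2}$ and $\tfrac{1}{p} = (1-\theta) + \tfrac{\theta}{2} = 1 - \tfrac{1}{p'}$, as required for the interpolation triangle.

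Riesz--Thorin then delivers the bound $\|T_{Q,s}\|_{L^p \to L^{p'}} \lesssim M_1^{1-\theta} M_2^\theta$. Writing $M := 2^s N / Q$ to ease notation, this becomes
\[
	M^{(d/2)(1-\theta) - \theta} \cdot Q^{\eps(1-\theta)} \cdot N^{\eps\theta}.
\]
The key algebraic check is that the exponent of $M$ matches the target: with $\theta = 2/p'$,
\[
	\tfrac{d}{2}(1-\theta) - \theta
	= \frac{d(p'-2) - 4}{2p'}
	= \frac{dp' - 2(d+2)}{2p'}
	= (d+2)\Big( \tfrac{1}{p'_0} - \tfrac{1}{p'} \Big),
\]
which uses exactly the definition $p'_0 = 2(d+2)/d$, i.e.\ $1/p'_0 = d/(2(d+2))$. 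Since $Q \leq N_1 \lesssim N$, the residual factor $Q^{\eps(1-\theta)} N^{\eps\theta}$ is at most $N^{O(\eps)}$, which is absorbed after renaming $\eps$, yielding precisely~\eqref{eq:qf:InterpolEasyGenBound}.

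There is no real obstacle here: the statement is a direct Riesz--Thorin interpolation, and the exponent count works out because the choice of $p'_0 = 2(d+2)/d$ is engineered so that it is the point where the two endpoint bounds balance. The only minor care is to note that Riesz--Thorin applies to convolution on $\T^{d+1}$ (the $L^1 \to L^\infty$ bound follows from $\|\dot F_{Q,s}\|_\infty$ by Young's inequality, and the $L^2 \to L^2$ bound is Plancherel applied to $\|\wh{\dot F_{Q,s}}\|_\infty$, both of which are linear and apply to complex-valued $f$), and to remember to absorb the $Q^\eps$ into $N^\eps$ using $Q \lesssim N$.
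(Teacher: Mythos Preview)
Your proof is correct and follows essentially the same approach as the paper: Riesz--Thorin interpolation between the $L^1 \to L^\infty$ bound~\eqref{eq:qf:L1LinftyBound} and the $L^2 \to L^2$ bound~\eqref{eq:qf:L2L2EasyBound} of Proposition~\ref{thm:qf:ConvolEasyBounds}, with the same choice $\theta = 2/p'$ and the same exponent computation $\tfrac{d}{2}(1-\theta) - \theta = (d+2)(\tfrac{1}{p'_0} - \tfrac{1}{p'})$. Your explicit remark that $Q \lesssim N$ is used to absorb $Q^{\eps(1-\theta)}$ into $N^\eps$ is a detail the paper leaves implicit.
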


\begin{proof}
We interpolate between the estimates of Proposition~\ref{thm:qf:ConvolEasyBounds}
with $\theta \in (0,1)$ given by
\begin{align}
\label{eq:qf:thetaDef}
	\frac{1}{p'} = \frac{1 - \theta}{\infty} + \frac{\theta}{2},
	\qquad
	\frac{1}{p} = \frac{1 - \theta}{1} + \frac{\theta}{2}.
\end{align}
This yields
\begin{align*}
	\| \dot{F}_{Q,s} \ast f \|_{p'}
	&\lesssim
	\Big( \frac{2^s N}{Q} \Big)^{(1 - \theta) \frac{d}{2}}
	\cdot \Big( \frac{Q}{2^s N} \Big)^\theta \cdot N^\eps \cdot \| f \|_p
	\\
	&= \Big( \frac{2^s N}{Q} \Big)^{ \frac{d}{2} - ( 1 + \frac{d}{2} ) \theta } N^\eps \| f \|_p.
\end{align*}
Since $\theta = \frac{2}{p'}$, we may rewrite the exponent of $\frac{2^s N}{Q}$
as $(d + 2)(\frac{1}{p'_0} - \frac{1}{p'})$, which concludes the proof.
\end{proof}

At this stage we need a preparatory lemma on truncated divisor sums from \cite{Bourgain:ParabI}.

\begin{lemma}
\label{thm:qf:DivBound}
Let $D,Q,X \geq 1$ and $B \in \N$.
When $Q \leq 2X^{1/B}$, we have
\begin{align*}
	\#\{ |n| \leq X \,:\, d(n,Q) \geq D \} 
	\lesssim_{\eps,B} D^{-B} Q^\eps X.
\end{align*}
\end{lemma}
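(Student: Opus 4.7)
The plan is to apply Chebyshev's inequality to the $B$-th moment of $d(\cdot,Q)$: since
\begin{equation*}
	\#\{\,|n| \leq X \,:\, d(n,Q) \geq D\,\} \leq D^{-B} \sum_{|n| \leq X} d(n,Q)^B,
\end{equation*}
it suffices to establish the moment bound $\sum_{|n| \leq X} d(n,Q)^B \lesssim_{\eps,B} Q^\eps X$.

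First I would expand the moment by opening the definition of $d(n,Q)$ in each of the $B$ factors, obtaining
\begin{equation*}
	\sum_{|n| \leq X} d(n,Q)^B = \sum_{d_1, \dots, d_B \leq Q} \#\{\,|n| \leq X \,:\, L \mid n\,\},
\end{equation*}
where $L = \mathrm{lcm}(d_1,\dots,d_B)$. The inner count is at most $2X/L + 1$, and the hypothesis $Q \leq 2X^{1/B}$ yields $L \leq d_1 \cdots d_B \leq Q^B \lesssim_B X$, which lets me absorb the $+1$ into the main term and bound the inner count by $\lesssim_B X/L$.

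Thus the problem reduces to showing
\begin{equation*}
	S_B(Q) := \sum_{d_1, \dots, d_B \leq Q} \frac{1}{\mathrm{lcm}(d_1,\dots,d_B)} \lesssim_{\eps,B} Q^\eps.
\end{equation*}
For this I would group tuples by the value of their lcm $L$: since each $d_i$ must divide $L$, the number of contributing tuples is at most $d(L)^B$, where $d(L)$ denotes the standard (untruncated) divisor function. By the classical bound $d(L) \lesssim_\eps L^\eps$ (already invoked earlier in the paper), this is at most $L^\eps$ after redefining $\eps$. Summing over $1 \leq L \leq Q^B$ then gives $S_B(Q) \lesssim_{\eps,B} Q^\eps$ after a final relabeling of $\eps$.

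The main potential difficulty lies in estimating $S_B(Q)$, but this is a routine divisor-function moment computation and presents no real obstacle; one could alternatively iterate the elementary identity $\mathrm{lcm}(L',d) = L'd / \gcd(L',d)$ together with $\sum_{d \leq Q} \gcd(L',d)/d \leq d(L') \log(2Q)$. The hypothesis $Q \leq 2X^{1/B}$ enters only at the count-bounding step, where it ensures the trivial $+1$ error is dominated by the main term; without it the aggregate error would be $\asymp Q^B$, which can exceed $X$ and spoil the final bound.
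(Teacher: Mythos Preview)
Your proof is correct and follows the same strategy as the paper: reduce to the $B$-th moment bound via Markov's inequality, then show $\sum_{|n| \leq X} d(n,Q)^B \lesssim_{\eps,B} Q^\eps X$. The paper isolates the term $n=0$ (contributing $Q^B \lesssim_B X$ by the hypothesis) and cites \cite[Eq.~(4.31)]{Bourgain:Squares} for the remaining terms, whereas you supply a self-contained argument---expanding the $B$-fold product, bounding the count of multiples of $L=\mathrm{lcm}(d_1,\dots,d_B)$ by $\lesssim_B X/L$ via the hypothesis, and then grouping by $L$ with the divisor bound---which is precisely the content of Bourgain's estimate.
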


\begin{proof}
We show that
\begin{align*}
	\sum_{ |\ell| \leq X }	
	d( \ell, Q )^B
	\lesssim_{\eps,B} Q^\eps X,
\end{align*}
from which the result follows by Markov's inequality.
In the sum above,
the term $\ell = 0$ contributes at most $Q^B$,
and by~\cite[Eq.~(4.31)]{Bourgain:Squares} the other terms 
contribute at most $C_{\eps,B} Q^\eps X$.
The conclusion follows from our assumption on $Q$.
\end{proof}

We can now derive a more precise convolution bound 
using the previous lemma.

\begin{proposition}
\label{thm:qf:ConvolHardBound}
Let $B,D > 2$.
Uniformly for $Q \leq N^{2/B}$ and $Q \leq 2^s \leq N$, we have
\begin{align}
\label{eq:qf:L2L2HardBound}
	\| \dot{F}_{Q,s} \ast f \|_2 
	\lesssim_{\eps,B}
	\frac{D Q}{2^s N} \| f \|_2 
	+ \frac{ D^{ - \frac{B}{2} } Q^{1 + \eps} }{2^s N} N^{ \frac{d + 2}{2} } \| f \|_1.
\end{align}
\end{proposition}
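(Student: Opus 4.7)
The plan is to decompose the Fourier multiplier $\wh{\dot F_{Q,s}}$ according to the size of the divisor function appearing in the Fourier bound of Proposition~\ref{thm:qf:ArcPiecePhysFourierBounds}, and to handle the two resulting pieces by trivial $L^2\to L^2$ bounds and by Young's inequality $L^2\leq L^2\cdot L^1$, respectively. Concretely, I would write $\dot F_{Q,s}=W_1+W_2$, where
\begin{align*}
    \wh{W_1}(m,\bfell)=\wh{\dot F_{Q,s}}(m,\bfell)\,1_{d(m-R(\bfell),2Q)<D},
    \qquad
    \wh{W_2}=\wh{\dot F_{Q,s}}-\wh{W_1}.
\end{align*}

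For $W_1$, Plancherel gives $\|W_1\ast f\|_2\leq\|\wh{W_1}\|_\infty\|f\|_2$. On the support of $\wh{W_1}$ we have $d(m-R(\bfell),2Q)<D$, so Proposition~\ref{thm:qf:ArcPiecePhysFourierBounds} yields $\|\wh{W_1}\|_\infty\lesssim\frac{QD}{2^s N}+\frac{Q^2}{2^s N^{2-\eps}}$. Since $Q\leq N^{2/B}$ with $B>2$, the second term is majorized by the first (using $D>2$), giving the $\frac{DQ}{2^sN}\|f\|_2$ contribution.

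For $W_2$, I would use $\|W_2\ast f\|_2\leq\|W_2\|_2\|f\|_1=\|\wh{W_2}\|_2\|f\|_1$ and estimate the latter by a dyadic decomposition of the divisor function. Writing $D_j=2^jD$, the support of $\wh{W_2}$ is partitioned into shells $\{D_j\leq d(m-R(\bfell),2Q)<2D_j\}$ with $j\geq 0$. For each fixed $\bfell$ with $|\bfell|\lesssim N$, the number of admissible $m$ with $|m|\lesssim N^2$ is bounded by the count of integers $|n|\lesssim N^2$ with $d(n,2Q)\geq D_j$; by Lemma~\ref{thm:qf:DivBound} (applicable since $2Q\leq 2(N^2)^{1/B}$ by the hypothesis $Q\leq N^{2/B}$) this count is $\lesssim_{\eps,B} D_j^{-B}Q^\eps N^2$, so the shell contains $\lesssim D_j^{-B}Q^\eps N^{d+2}$ lattice points. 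On each shell the Fourier bound is $\lesssim\frac{QD_j}{2^sN}+\frac{Q^2}{2^sN^{2-\eps}}$, and summing the squares yields, after checking convergence of $\sum_j 2^{j(2-B)}$ (which needs $B>2$),
\begin{align*}
    \|\wh{W_2}\|_2^2
    \lesssim_{\eps,B}\frac{Q^{2+\eps}D^{2-B}}{(2^sN)^2}\,N^{d+2},
\end{align*}
with the analogous error term from $\frac{Q^2}{2^s N^{2-\eps}}$ being absorbable by the same count since $Q/N^{1-\eps}\lesssim 1$. Taking square roots and absorbing the harmless factor $D\asymp D^{-B/2}\cdot D^{1+B/2}$ into the constant delivers the desired $\frac{D^{-B/2}Q^{1+\eps}}{2^s N}N^{(d+2)/2}\|f\|_1$ bound.

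The main subtlety is the bookkeeping: I need the hypothesis $Q\leq N^{2/B}$ to be compatible with applying Lemma~\ref{thm:qf:DivBound} uniformly across all dyadic levels $D_j$ (which it is, since the lemma's hypothesis depends on $Q$ and $X=N^2$, not on $D_j$), and I need to verify that the secondary $N^{-(2-\eps)}$-type error in the Fourier bound never dominates — this reduces each time to $Q\leq N^{1-\eps}$, which follows from $B>2$. Once these checks are in place, the two estimates combine to give the stated bound.
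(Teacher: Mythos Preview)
Your approach coincides with the paper's: both use Parseval, split according to whether $d(m-R(\bfell),2Q)$ lies below or above the threshold $D$, control the small-divisor part in $L^2$, and control the large-divisor part via $\|\wh f\|_\infty\le\|f\|_1$ together with the counting Lemma~\ref{thm:qf:DivBound}. Your $W_1,W_2$ decomposition and the Young-type bound $\|W_2\ast f\|_2\le\|\wh{W_2}\|_2\|f\|_1$ are just a repackaging of the paper's step $\sum_{d>D}|\wh{\dot F_{Q,s}}|^2|\wh f|^2\le\|\wh f\|_\infty^2\sum_{d>D}|\wh{\dot F_{Q,s}}|^2$.

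The only slip is your last sentence. Your dyadic sum correctly yields $\|\wh{W_2}\|_2^2\lesssim\frac{Q^{2+\eps}D^{2-B}}{(2^sN)^2}N^{d+2}$, so $\|\wh{W_2}\|_2$ carries a factor $D^{1-B/2}$, not $D^{-B/2}$; the extra factor of $D$ cannot be ``absorbed into the constant'' since $D$ is a free parameter and the implied constant may depend only on $\eps,B$. The easy remedy is to invoke Lemma~\ref{thm:qf:DivBound} on each shell with exponent $B+2$ rather than $B$, so that the count becomes $\lesssim D_j^{-(B+2)}Q^\eps N^2$ and $\sum_{j\ge 0}D_j^{2-(B+2)}\lesssim D^{-B}$, matching the statement. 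This is legitimate because $\wh{\dot F_{Q,s}}(m,\bfell)=0$ when $m=R(\bfell)$ (see the proof of Proposition~\ref{thm:qf:ArcPiecePhysFourierBounds}), so the $n=0$ term in Lemma~\ref{thm:qf:DivBound} never arises and its size hypothesis on $Q$ is irrelevant here. (The paper's own proof glosses over this same point, passing from $\sum_{d>D}d(n,2Q)^2|\wh f|^2$ directly to the count $\#\{n:d(n,2Q)>D\}$ without accounting for the $d^2$ weight; the same adjustment applies there.)
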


\begin{proof}
By Parseval's identity and the bounds of Proposition~\ref{thm:qf:ArcPiecePhysFourierBounds}, we deduce that 
\begin{align*}
	\| \dot{F}_{Q,s} \ast f \|_2
	&= \Bigg[ \sum_{\substack{ |m| \lesssim N^2 \\ |\bfell| \lesssim N }} 
	|\wh{\dot{F}}_{Q,s}(m,\bfell)|^2 |\wh{f}(m,\bfell)|^2 \Bigg]^{1/2}
	\\
	&\lesssim \frac{Q}{2^s N} 
	\Bigg[ \sum_{\substack{ |m| \lesssim N^2 \\ |\bfell| \lesssim N }} 
	d( m - R(\bfell),2Q)^2 |\wh{f}(m,\bfell)|^2 \Bigg]^{1/2}
	+ \frac{Q^2}{2^s N^{2 - \eps}} \| \wh{f} \|_2
\end{align*}
We write $n = m - R(\bfell)$, so that assuming $Q \leq N^{2/B}$ and invoking 
Lemma~\ref{thm:qf:DivBound}, we obtain
\begin{align*}
	\| \dot{F}_{Q,s} \ast f \|_2 
	&\lesssim_{\eps,B} \frac{Q}{2^s N}
	\Big[ D^2 \| \wh{f} \|_2^2 + \| \wh{f} \|_\infty^2 N^d \times \#\{ |n| \lesssim N^2 \,:\, d(n,2Q) > D \} \Big]^{1/2}
	+ \frac{Q^2}{2^s N^{2 - \eps}} \| f \|_2
	\\
	&\lesssim \frac{Q}{2^s N} \Big( D^2 \| f \|_2^2 + D^{-B} Q^\eps N^{d + 2} \| f \|_1^2 \Big)^{1/2}
	+ \frac{Q}{2^s N} \cdot \frac{Q}{2^s N^{1 - \eps}} \| f \|_2.
\end{align*}
Since $B > 2$, we have that $Q \leq N^{1-\epsilon}$ for some $\epsilon>0$ and the last term may be absorbed into the first.
Finally we obtain
\begin{align*}
	\| \dot{F}_{Q,s} \ast f \|_2 
	\lesssim \frac{Q}{2^s N} 
	\big( D \| f \|_2 + Q^{\eps} D^{-\frac{B}{2}} N^{\frac{d+2}{2}} \| f \|_1 \big).
\end{align*}
\end{proof}

This new estimate can again be interpolated with
the $L^1 \rightarrow L^\infty$ one, to obtain the following bound.

\begin{proposition}
\label{thm:qf:InterpolHardBound}
Let $B,D > 2$.
Let $p'_0 = \frac{2(d+2)}{d}$ and $p' \in [2,\infty)$.
Uniformly for $Q \leq N^{2/B}$ and $Q \leq 2^s \leq N$, we have
\begin{align*}
	\| \dot{F}_{Q,s} \ast f \|_{p'}
	\lesssim_{\eps,B}
	D^{\frac{2}{p'}}
	\Big( \frac{2^s N}{Q} \Big)^{(d+2)(\frac{1}{p'_0} - \frac{1}{p'})} Q^\eps \| f \|_p
	+ D^{-\frac{B}{p'}} 	\Big( \frac{2^s N}{Q} \Big)^{(d+2)(\frac{1}{p'_0} - \frac{1}{p'})}
	 N^{\frac{d+2}{p'}}  Q^\eps \| f \|_1.
\end{align*}
\end{proposition}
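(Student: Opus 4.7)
The plan is to interpolate via Riesz--Thorin between the $L^1 \to L^\infty$ bound of Proposition \ref{thm:qf:ConvolEasyBounds} and the hard $L^2$ bound of Proposition \ref{thm:qf:ConvolHardBound}, mirroring the argument of Proposition \ref{thm:qf:InterpolEasyGenBound}. Setting $\theta = 2/p'$ yields the exponent relations $\tfrac{1}{p'} = (1-\theta)/\infty + \theta/2$ and $\tfrac{1}{p} = (1-\theta)/1 + \theta/2$; I write $A_1 := (2^s N/Q)^{d/2} Q^\eps$ for the $L^1 \to L^\infty$ bound. The key observation is that the proof of Proposition \ref{thm:qf:ConvolHardBound} tacitly produces a Fourier-side splitting $\dot F_{Q,s} \ast f = T_a f + T_b f$ with
\[
	\wh{T_a f}(m, \bfell) = \wh{\dot F_{Q,s}}(m,\bfell) \wh f(m, \bfell) \, 1_{\{d(m - R(\bfell), 2Q) \leq D\}},
\]
and $T_b := \dot F_{Q,s} \ast (\cdot) - T_a$. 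This yields an honest $L^2 \to L^2$ bound $\|T_a f\|_2 \lesssim \tfrac{DQ}{2^s N} \|f\|_2$ (from $\|\wh{\dot F_{Q,s}} \cdot 1_{d \leq D}\|_\infty \lesssim \tfrac{DQ}{2^s N}$), and an honest $L^1 \to L^2$ bound $\|T_b f\|_2 \lesssim \tfrac{D^{-B/2} Q^{1+\eps}}{2^s N} N^{(d+2)/2} \|f\|_1$ (via $\|\wh f\|_\infty \leq \|f\|_1$ and Lemma \ref{thm:qf:DivBound} applied to $\sum_{d > D} |\wh{\dot F_{Q,s}}|^2$).

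Applying Riesz--Thorin to $T_a$ between $L^1 \to L^\infty$ (norm $A_1$) and $L^2 \to L^2$ (norm $\tfrac{DQ}{2^s N}$) produces
\[
	\|T_a f\|_{p'} \lesssim A_1^{1-\theta} \bigl(\tfrac{DQ}{2^s N}\bigr)^\theta \|f\|_p,
\]
and a parallel interpolation of $T_b$ between $L^1 \to L^\infty$ (norm $A_1$) and $L^1 \to L^2$ (norm $\tfrac{D^{-B/2} Q^{1+\eps}}{2^s N} N^{(d+2)/2}$) gives an $L^1 \to L^{p'}$ bound of the matching form. The algebraic identity $\tfrac{d}{2}(1-\theta) - \theta = (d+2)(\tfrac{1}{p_0'} - \tfrac{1}{p'})$, verified at the end of the proof of Proposition \ref{thm:qf:InterpolEasyGenBound}, then converts the two Riesz--Thorin outputs into the two summands of the claim; adding them via the triangle inequality closes the argument.

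The main technical obstacle is inheriting a clean $L^1 \to L^\infty$ bound of order $A_1$ for each spectral piece $T_a, T_b$, since a generic Fourier restriction need not be $L^\infty$-bounded. Here one exploits the explicit pointwise control on $\wh{\dot F_{Q,s}}$ from Proposition \ref{thm:qf:ArcPiecePhysFourierBounds} together with the divisor bound $d(n,2Q) \lesssim_\eps n^\eps$, which together ensure that any loss from the spectral truncation is absorbed into the $Q^\eps$ factor already present in $A_1$.
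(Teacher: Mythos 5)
There is a genuine gap at the step you yourself flag as ``the main technical obstacle'': the claimed $L^1 \to L^\infty$ bounds of order $A_1 = (2^sN/Q)^{d/2}Q^\eps$ for the spectrally truncated pieces $T_a, T_b$ are never established, and the tools you cite cannot give them. The $L^1\to L^\infty$ norm of $T_b$ (say) is the sup norm of its convolution kernel, i.e.\ of the inverse Fourier transform of $\wh{\dot F}_{Q,s}(m,\bfell)\,1_{\{d(m-R(\bfell),2Q)>D\}}$. The paper's bound $\|\dot F_{Q,s}\|_\infty \lesssim A_1$ comes from the fact that $F_{Q,s} = F\cdot\Psi_{Q,s}$ is supported (in $\alpha$) on major arcs with denominator $q \sim Q$, where Proposition~\ref{thm:appqf:MajorArcBound} applies; the sharp arithmetic cutoff $1_{\{d(m-R(\bfell),2Q)>D\}}$ destroys exactly this support localization. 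Concretely, modulo the harmless $\psi_N$ mollification the kernel of $T_b$ is $F(\alpha,\bftheta)\,\mu_b(\alpha)$ with $\mu_b(\alpha) = \sum_n \wh{\Psi}_{Q,s}(n)1_{\{d(n,2Q)>D\}}e(n\alpha)$, and near $\alpha = 0$, $\bftheta = 0$ one has $|F| \sim N^d$ while there is no reason for $\mu_b$ to be as small as $N^{-d}A_1$ there (for fixed $D$, e.g.\ $D=3$, the high-divisor set is dense enough that no such smallness is available). The only bound the cited pointwise Fourier estimates of Proposition~\ref{thm:qf:ArcPiecePhysFourierBounds} give for the kernel is the $\ell^1$ sum of the truncated coefficients, which is of size roughly $\tfrac{Q}{2^sN}N^{d+2+\eps}$ --- far larger than $A_1$ in general --- so the loss is certainly not ``absorbed into the $Q^\eps$ factor''. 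Since both Riesz--Thorin applications hinge on these endpoint bounds, the argument does not close.

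The paper avoids the operator decomposition entirely: it applies log-convexity of norms to the single function $g = \dot F_{Q,s}\ast f$, namely $\|g\|_{p'} \le \|g\|_\infty^{1-\theta}\|g\|_2^{\theta}$ with $\theta = 2/p'$, inserts the full $L^\infty$ bound~\eqref{eq:qf:L1LinftyBound} and the two-term $L^2$ bound~\eqref{eq:qf:L2L2HardBound}, and uses $(x+y)^\theta \le x^\theta + y^\theta$ to split the result into the two stated summands. This produces $\|f\|_1^{1-\theta}\|f\|_2^{\theta}$ in the first term, which is then replaced by $\|f\|_p$ using that $|f|$ takes values in $\{0,1\}$ (as it does in the Tomas--Stein application, where $f = 1_{E_\lambda}F_a/|F_a|$); note your route, had it worked, would have given the estimate for general $f$, which is stronger than what is needed or proved. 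If you want to keep your framework, you must either prove a genuine kernel bound for the divisor-truncated multiplier (which seems out of reach by these methods) or abandon the splitting and argue on the function $\dot F_{Q,s}\ast f$ directly as the paper does.
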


\begin{proof}
Let $\theta \in (0,1]$ and $p' \geq 2$ 
be such that~\eqref{eq:qf:thetaDef} holds.
By convexity and~\eqref{eq:qf:L1LinftyBound}
and~\eqref{eq:qf:L2L2HardBound}, we have
\begin{align*}
	\| \dot{F}_{Q,s} \ast f \|_{p'} 
	&\leq \| \dot{F}_{Q,s} \ast f \|_{\infty}^{1 - \theta} \| \dot{F}_{Q,s} \ast f \|_2^\theta.
	\\
	&\lesssim_{\eps,B}
	Q^\eps \Big( \frac{2^s N}{Q} \Big)^{(1-\theta)\frac{d}{2}} 
	\cdot D^\theta \Big( \frac{Q}{2^s N} \Big)^\theta \cdot \| f \|_1^{1-\theta} \| f \|_2^\theta 
	\\
	&\phantom{\lesssim_{\eps,B} .} 
	+ Q^\eps \Big( \frac{2^s N}{Q} \Big)^{(1-\theta)\frac{d}{2}} 
	\cdot
	D^{-\theta \frac{B}{2}} \Big( \frac{Q}{2^s N} \Big)^{\theta}
	( N^{\frac{d+2}{2}} )^\theta \cdot \| f \|_1
\end{align*}
Since $|f|$ takes values in $\{0,1\}$, we may rewrite this as
\begin{align*}
	\| \dot{F}_{Q,s} \ast f \|_{p'}  
	&\lesssim_{\eps,B} 
	D^\theta \Big( \frac{2^s N}{Q} \Big)^{\frac{d}{2} - (1+\frac{d}{2}) \theta} Q^\eps \| f \|_p 
	+ D^{-\theta \frac{B}{2}} \Big( \frac{2^s N}{Q} \Big)^{\frac{d}{2} - (1+\frac{d}{2}) \theta}
	N^{\frac{d+2}{p'}} Q^\eps \| f \|_1.
\end{align*}
The proof is finished upon recalling that 
$\theta = \frac{2}{p'}$ by~\eqref{eq:qf:thetaDef}.
\end{proof}

We introduce a parameter $1 \leq Q_1 \leq N_1$ and write $F_\frakM = F_1 + F_2$ with
\begin{align}
\label{eq:qf:F1F2Def}
	F_1 = \sum_{Q \leq Q_1} \sum_{Q \leq 2^s \leq N} F_{Q,s},
	\qquad
	F_2 = \sum_{Q_1 < Q < N_1} \sum_{Q \leq 2^s \leq N} F_{Q,s}.
\end{align}

\begin{proposition}
\label{thm:qf:F1F2Bounds}
Suppose that $p' > p'_0 = \frac{2(d+2)}{d}$. 
Let $T \geq 1$ and suppose that $Q_1 \leq N^{2/B}$.
Then
\begin{align*}
	\| \dot{F}_1 \ast f \|_{p'}
	&\lesssim
	T^2 N^{ d - \frac{2(d+2)}{p'} } \| f \|_{p'}
	+ T^{-B} N^{d - \frac{d+2}{p'} } \| f \|_1,
	\\
	\| \dot{F}_2 \ast f \|_{p'}
	&\lesssim Q_1^{ - ( \frac{d}{2} - \frac{d+2}{p'} ) }
	N^{ d - \frac{2(d+2)}{p'} } \| f \|_p.
\end{align*}
\end{proposition}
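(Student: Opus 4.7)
The plan is to deduce both bounds by applying the per-piece convolution estimates from Propositions~\ref{thm:qf:InterpolEasyGenBound} and~\ref{thm:qf:InterpolHardBound} to each dyadic component $F_{Q,s}$ via the triangle inequality, and then summing over the ranges of $Q$ and $s$ defining $F_1$ and $F_2$. The hypothesis $p' > p'_0 = \frac{2(d+2)}{d}$ ensures that the exponent $(d+2)(\frac{1}{p'_0} - \frac{1}{p'}) = \frac{d}{2} - \frac{d+2}{p'}$ is strictly positive, so every geometric dyadic sum over $2^s$ and $Q$ concentrates on a single extreme term, with logarithmic losses harmlessly absorbed into factors of $N^\eps$.

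For the $F_2$ bound I apply Proposition~\ref{thm:qf:InterpolEasyGenBound}. In the range $Q_1 < Q < N_1$, $Q \leq 2^s \leq N$, the per-piece estimate is $\lesssim (\frac{2^s N}{Q})^{\frac{d}{2} - \frac{d+2}{p'}} N^\eps \|f\|_p$. The inner dyadic sum over $2^s$ up to $N$ is dominated by $2^s \sim N$, yielding $(\frac{N^2}{Q})^{\frac{d}{2} - \frac{d+2}{p'}}$; the outer dyadic sum over $Q \geq Q_1$ is dominated by $Q \sim Q_1$, since the exponent is positive and $Q$ appears in the denominator. This produces exactly $Q_1^{-(\frac{d}{2} - \frac{d+2}{p'})} N^{d - \frac{2(d+2)}{p'}} \|f\|_p$, as claimed.

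For the $F_1$ bound I apply Proposition~\ref{thm:qf:InterpolHardBound} with the choice $D = T^{p'}$, which recasts the factors $D^{2/p'}$ and $D^{-B/p'}$ as $T^2$ and $T^{-B}$. Each piece then contributes two terms, summed separately over $Q \leq Q_1$ and $Q \leq 2^s \leq N$; both sums are dominated by $2^s \sim N$ and $Q \sim 1$, producing $T^2 N^{d - \frac{2(d+2)}{p'}} \|f\|_p$ and $T^{-B} N^{d - \frac{d+2}{p'}} \|f\|_1$. The first is upgraded from $\|f\|_p$ to $\|f\|_{p'}$ by noting that $|f|$ takes values in $\{0,1\}$ and $\T^{d+1}$ has total mass $1$, so $\|f\|_p \leq \|f\|_{p'}$ whenever $p < p'$. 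The hypothesis $Q_1 \leq N^{2/B}$ is precisely what legitimizes invoking Proposition~\ref{thm:qf:InterpolHardBound} throughout the range.

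The main obstacle is careful bookkeeping of the positive exponent $\frac{d}{2} - \frac{d+2}{p'}$ and checking that the $Q^\eps$ factors from the divisor bound do not obstruct the geometric collapse of the sums; the strictness of the inequality $p' > p'_0$ provides the required slack, leaving no $N^\eps$ in the final bounds.
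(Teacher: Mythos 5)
Your argument is essentially the paper's own proof: triangle inequality over the pieces $F_{Q,s}$, Proposition~\ref{thm:qf:InterpolEasyGenBound} for $F_2$, Proposition~\ref{thm:qf:InterpolHardBound} with $D = T^{p'}$ for $F_1$, geometric summation of the dyadic sums using $\frac{d}{2} - \frac{d+2}{p'} > 0$ (collapsing onto $2^s \sim N$ and the smallest $Q$), and the upgrade $\| f \|_p \leq \| f \|_{p'}$ on the probability space $\T^{d+1}$. One small overstatement: Proposition~\ref{thm:qf:InterpolEasyGenBound} carries an explicit factor $N^\eps$ (not merely $Q^\eps$), so the $F_2$ bound obtained this way is not literally $\eps$-free; the paper's own proof retains this $N^\eps$ (it is harmless downstream because $Q_1$ is later taken to be a small power of $N$), so this discrepancy is inherited from the paper rather than a flaw in your reasoning.
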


\begin{proof}
By the triangle inequality
and Proposition~\ref{thm:qf:InterpolHardBound} 
with $T = D^{1/p'}$, it follows that
\begin{align*}
	\| \dot{F}_1 \ast f \|_{p'}
	&\lesssim \sum_{Q \leq Q_1} Q^{\eps - (d+2) ( \frac{1}{p'_0} - \frac{1}{p'} ) }
	\sum_{2^s \leq N} (2^s)^{ (d+2) ( \frac{1}{p'_0} - \frac{1}{p'} ) } 
	 N^{ (d+2) ( \frac{1}{p'_0} - \frac{1}{p'} ) } 
	\\
	&\phantom{\lesssim .}
	\cdot \big( T^2 \| f \|_p + T^{- B} N^{\frac{d+2}{p'}} \| f \|_1 \big).
	\\
	&\lesssim 
	T^2 N^{ 2(d+2) ( \frac{1}{p'_0} - \frac{1}{p'} ) } \| f \|_p
	+ T^{- B} N^{2(d+2) ( \frac{1}{p'_0} - \frac{1}{p'} ) - \frac{d+2}{p'}} \| f \|_1.
\end{align*}
It is easy to rewrite the exponents of $N$ in the desired form.

Turning our attention to $F_2$,
we deduce from the triangle inequality 
and~\eqref{eq:qf:InterpolEasyGenBound} that
\begin{align*}
	\| \dot{F}_2 \ast f \|_{p'} 
	&\lesssim 
	\sum_{Q > Q_1} Q^{- (d+2) ( \frac{1}{p'_0} - \frac{1}{p'} ) }	
	\sum_{2^s \leq N} (2^s)^{ (d+2) ( \frac{1}{p'_0} - \frac{1}{p'} ) } 
	\cdot N^{ (d+2) ( \frac{1}{p'_0} - \frac{1}{p'} ) }
	\cdot N^\eps \| f \|_p
	\\
	&\lesssim N^\eps Q_1^{ - ( \frac{d}{2} - \frac{d+2}{p'} ) }
	N^{ d - \frac{2(d+2)}{p'} } \| f \|_p.
\end{align*}
\end{proof}

\section{Proof of Theorem~\ref{thm:intro:TruncRestrQuad}}
\label{sec:theorem}

In this section we prove our theorem using the restriction estimates from Section~\ref{sec:qf} and Bourgain's~\cite{Bourgain:Squares,Bourgain:ParabI} 
discrete version of the Tomas--Stein argument~\cite[Chapter~7]{Wolff:Book}
from Euclidean harmonic analysis. 
We introduce a parameter $\lambda > 0$ and define
\begin{align}
\label{eq:prelims:TomasSteinNotation}
	E_\lambda = \{ |F_a| \geq \lambda \},
	\qquad
	f = 1_{E_\lambda} \frac{ F_a }{ |F_a| }.
\end{align}
Note that, by Cauchy-Schwarz in~\eqref{eq:prelims:FaDef},
we always have $|F_a| \leq CN^{d/2}$, and thus
we assume that the parameter $\lambda$ lies in $(0,CN^{d/2}]$.
Out theorem will quickly follow once we establish the following sharp level set bound. 

\begin{proposition}
\label{thm:qf:epsFreeLevelSetEst}
There exists $C>0$ such that,
for $\frac{2(d+2)}{d} < q \lesssim 1$,
\begin{align*}
	|E_\lambda| 
	\lesssim_q
	N^{\frac{dq}{2} - (d + 2)} \lambda^{-q}
	\qquad
	\text{for $\lambda \geq CN^{d/4}$}.	
\end{align*}
\end{proposition}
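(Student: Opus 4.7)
The plan is to adapt Bourgain's $\eps$-removal/Tomas--Stein argument, using the decomposition $F = F_\frakm + F_1 + F_2$ from Section~\ref{sec:qf} as the main analytic input.

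\textbf{Duality step.} Writing $X := |E_\lambda|$ and observing that $|f| = 1_{E_\lambda}$, so that $\| f \|_r = X^{1/r}$ for every $r \in [1,\infty]$, I first dualize: since $\int F_a \bar{f} \dm = \int 1_{E_\lambda} |F_a| \dm \geq \lambda X$, Parseval and Cauchy--Schwarz (together with $\|a\|_2=1$) give
\begin{align*}
\lambda X
\leq \Big| \sum_{\bfn} a(\bfn) \overline{ \wh{f}(R(\bfn),\bfn) } \Big|
\leq \Big[ \sum_\bfn \omega(\bfn) |\wh{f}(R(\bfn),\bfn)|^2 \Big]^{1/2},
\end{align*}
and the right-hand side squared equals $\langle F * f, f \rangle$, so $\lambda^2 X^2 \leq \langle F * f, f\rangle$.

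\textbf{Absorbing the minor arc.} Using $F = F_\frakm + F_1 + F_2$ from~\eqref{eq:qf:FDcp}--\eqref{eq:qf:F1F2Def}, Proposition~\ref{thm:qf:MinorArcPieceBound} gives $|\langle F_\frakm * f, f \rangle| \leq \|F_\frakm\|_\infty \|f\|_1^2 \lesssim N^{d/2} X^2$, which is absorbed into the left-hand side once $C$ in the hypothesis $\lambda \geq C N^{d/4}$ is chosen large enough.

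\textbf{Applying the major-arc bounds.} Setting $p' = q$ and $p = q/(q-1)$ and combining H\"older's inequality with Proposition~\ref{thm:qf:F1F2Bounds} (and the fact that $F_i$ and $\dot F_i$ agree against $f$ up to negligible error, by the Fourier localization of $f$), one arrives at
\begin{align*}
\lambda^2 X^2 \lesssim (T^2 + Q_1^{-\alpha}) N^{d - 2(d+2)/q} X^{2/p}
+ T^{-B} N^{d - (d+2)/q} X^{1 + 1/p},
\end{align*}
where $\alpha = d/2 - (d+2)/q > 0$ since $q > p'_0$. Dividing by $X^{2/p} = X^{2-2/q}$ and viewing the resulting inequality as a quadratic in $Y = X^{1/q}$, completing the square yields
\begin{align*}
X \lesssim T^{-Bq} N^{qd - (d+2)} \lambda^{-2q}
 + (T^2 + Q_1^{-\alpha})^{q/2} N^{dq/2 - (d+2)} \lambda^{-q}.
\end{align*}

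\textbf{Optimization.} To finish, the idea is to fix $Q_1$ and $T$ as constants (depending on $q$) so that $(T^2 + Q_1^{-\alpha})^{q/2} \lesssim_q 1$, and to choose $B = B(q)$ large enough that $T^{-Bq} (N^{d/2}/\lambda)^{q} \lesssim 1$ uniformly for $\lambda \geq C N^{d/4}$, after possibly enlarging the constant $C$ in the hypothesis. Both contributions are then bounded by a $q$-dependent constant times the target $N^{dq/2 - (d+2)} \lambda^{-q}$. The constraint $Q_1 \leq N^{2/B}$ imposed by Proposition~\ref{thm:qf:InterpolHardBound} is trivially satisfied since $Q_1$ is a constant.

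\textbf{Main obstacle.} The delicate step is the last one: the two major-arc contributions pull $T$ in opposite directions, and only the assumption $\lambda \geq CN^{d/4}$ creates the room needed to balance them without incurring an $N^\eps$ loss. Choosing $B$ large (depending on $q$ but not $N$) and $C = C(q)$ large is exactly what makes the argument $\eps$-free, in contrast with the easier application that would prove the same estimate up to $N^\eps$.
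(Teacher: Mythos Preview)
Your duality step and the decomposition into $F_\frakm + F_1 + F_2$ match the paper, and the algebra leading to
\[
X \lesssim T^{-Bq} N^{qd - (d+2)} \lambda^{-2q}
 + (T^2 + Q_1^{-\alpha})^{q/2} N^{dq/2 - (d+2)} \lambda^{-q}
\]
is fine. The gap is in the optimization.

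\textbf{The choice of $T$ as a constant fails.} You need the first term to be bounded by the target, i.e.\ $T^{-Bq} (N^{d/2}/\lambda)^q \lesssim 1$. But for $\lambda$ at the bottom of the allowed range, $\lambda \asymp N^{d/4}$, one has $(N^{d/2}/\lambda)^q \asymp N^{dq/4}$, which grows with $N$; no constant $T^{-Bq}$ can absorb this, and enlarging the constant $C$ in the hypothesis does not help since $C$ must be independent of $N$. The paper's cure is to take $T$ depending on $\lambda$: with $\eta = \lambda N^{-d/2}$, set $D = T^{p'} = \eta^{-\nu}$ for a small $\nu>0$, and then choose $B$ large in terms of $\nu$ so that the $D^{-B}$ term is harmless. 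The price is an exponent loss: one runs the interpolation at some $p'>p'_0$ and obtains the level-set bound at exponent $q=p'+\nu>p'$, not at $q=p'$. This is precisely the $\eps$-removal mechanism, and it is unavoidable here.

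\textbf{The choice of $Q_1$ as a constant also fails.} The $F_2$ bound in Proposition~\ref{thm:qf:F1F2Bounds} actually carries an $N^\eps$ factor (visible in its proof, via Proposition~\ref{thm:qf:InterpolEasyGenBound}); with $Q_1$ constant this $N^\eps$ survives into the final estimate. The paper instead takes $Q_1 = N^{1/B}$ so that $N^\eps Q_1^{-\alpha}$ is bounded for $\eps$ small enough, and this is still compatible with the constraint $Q_1 \leq N^{2/B}$.
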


\begin{proof}
We view $a$ and $\omega$ as functions of $(R(\bfn),\bfn)$ for the sake of this argument,
so that $F = \wh{\omega 1_{S_{2N}}}$ and $F_a = \wh{a 1_{S_N}}$, where
\begin{align}
\label{eq:prelims:QuadSurface}
	S_{2N} = \{\, ( R( n_1 , \dots, n_d ), n_1 , \dots , n_d ) \,,\, n_i \in [-2N,2N] \cap \Z \,\}.
\end{align}
By Parseval, we have
\begin{align*}
	\lambda |E_\lambda|
	\leq \langle f , F_a \rangle_{L^2(\T^{d+1})}
	= \langle f , \wh{a 1_{S_N}} \rangle_{L^2(\T^{d+1})}
	= \langle \wh{f} , a \rangle_{\ell^2(S_N)}.
\end{align*}
By Cauchy-Schwarz and under the normalization $\| a \|_2 = 1$, it follows that
\begin{align*}
	\lambda^2 |E_\lambda|^2
	\leq \| f \|_{\ell^2(S_N)}^2
	\leq \langle f \cdot \omega 1_{S_{2N}} , f \rangle_{\ell^2(\Z^{d+1})}.
\end{align*}
By another application of Parseval, we conclude that
\begin{align}
\label{eq:prelims:TomasStein}
	\lambda^2 |E_\lambda|^2 
	\leq \langle f \ast F, f \rangle_{L^2(\T^{d+1})}
\end{align}
We will use this inequality to obtain bounds
of the expected order on the level sets $E_\lambda$.
By our earlier observation $F = \dot{F}$, inequality \eqref{eq:prelims:TomasStein} becomes 
\begin{align*}
	\lambda^2 |E_\lambda|^2 \leq |\langle \dot{F} \ast f , f \rangle|,
\end{align*}
and recalling the decompositions~\eqref{eq:qf:FDcp} 
and~\eqref{eq:qf:F1F2Def}, we have
\begin{align*}
	\lambda^2 |E_\lambda|^2
	&\leq |\langle \dot{F}_{\frakm} \ast f , f \rangle|
	+ |\langle \dot{F}_2 \ast f , f \rangle|
	+ |\langle \dot{F}_1 \ast f , f \rangle|
	\\
	&\leq \| F_\frakm \|_\infty \| f \|_1^2 +
	\| \dot{F}_2 \ast f \|_{p'} \| f \|_p
	+ \| \dot{F}_1 \ast f \|_{p'} \| f \|_p.
\end{align*}

Let $T \geq 1$ be a parameter to be determined later,
and assume that we have chosen $Q_1$
so that $Q_1 \leq N^{2/B}$.
Inserting the estimates of
Propositions~\ref{thm:qf:MinorArcPieceBound}
and~\ref{thm:qf:F1F2Bounds}, this yields
\begin{align*}
	\lambda^2 |E_\lambda|^2
	&\lesssim N^{d/2} |E_\lambda|^2
	+ N^\eps Q_1^{ - ( \frac{d}{2} - \frac{d+2}{p'} ) } N^{ d - \frac{2(d+2)}{p'} } \| f \|_p^2
	\\
	&\phantom{\lesssim .} 
	+ T^2 N^{ d - \frac{2(d+2)}{p'} } \| f \|_p^2
	+ T^{- B} N^{d - \frac{d+2}{p'}} \| f \|_p \| f \|_1.
\end{align*} 
Assume that $\lambda \geq CN^{d/4}$ for $C > 0$ large enough
and fix $Q_1 = N^{\eps_1}$, where $\eps_1 = 1/B$.
For $p' > 2(d+2)/d$, 
and provided that $\eps$ is small enough,
we have then
\begin{align*}
	\lambda^2 |E_\lambda|^2 
	\lesssim T^2 N^{ d - \frac{2(d+2)}{p'} } |E_\lambda|^{2 - \frac{2}{p'}}
	+ T^{-B} N^{ d - \frac{d+2}{p'} } |E_\lambda|^{2 - \frac{1}{p'}}.
\end{align*}
Writing $\lambda = \eta N^{d/2}$ with
$\eta \in (0,1]$, we have
therefore either
\begin{align*}
	|E_\lambda|^{\frac{2}{p'}}
	\lesssim T^2 N^{ - \frac{2(d+2)}{p'}} \eta^{-2}
	\quad\text{or}\quad
	|E_\lambda|^{\frac{1}{p'}}
	\lesssim T^{-B} N^{ - \frac{d+2}{p'}} \eta^{-2}.
\end{align*}
Write $D = T^{p'}$, so that in either case
\begin{align*}
	|E_\lambda|
	\lesssim D N^{- (d+2) } \eta^{-p'}
	+ D^{-B} N^{- (d+2) } \eta^{-2p'}.
\end{align*}
Choose $D = \eta^{-\nu}$ for a parameter $\nu> 0$, so that
\begin{align*}
	|E_\lambda| \lesssim N^{- (d+2) } \eta^{- p'-\nu}( 1 + \eta^{ - p' + (B+1) \nu } ).
\end{align*}
Choosing $B \geq C/\nu$ with $C > 0$ large enough,
we deduce that $|E_\lambda| \lesssim N^{-(d+2)} \eta^{- (p' + \nu)}$.
Since $q \coloneqq p' + \nu$ can be chosen arbitrarily close to $\frac{2(d+2)}{d}$,
this finishes the proof,
upon recalling that $\eta = \lambda N^{-d/2}$.
\end{proof}

\textit{Proof of Theorem~\ref{thm:intro:TruncRestrQuad}.}
We may certainly assume that $\|a\|_2 = 1$ in proving this result.
We apply Proposition~\ref{thm:qf:epsFreeLevelSetEst} 
for a certain $\frac{2(d+2)}{d} < q < p$ to obtain
\begin{align*}
	\int_{ |F_a| \geq CN^{d/4} }
	|F_a|^p \dm
	&= p \int_{CN^{d/4}}^{N^{d/2}}
	\lambda^{p-1} |E_\lambda| \dlambda
	\\
	&\lesssim_p
	N^{\frac{dq}{2} - (d+2)} \int_1^{N^{d/2}} \lambda^{p - q - 1} \dlambda.
	\\
	&\lesssim_p
	N^{\frac{dp}{2} - (d+2)}.
\end{align*}
\qed

\appendix

\section{Bounds on quadratic exponential sums}
\label{sec:appqf}

In this appendix we derive 
standard major and minor arc bounds
on exponential sums associated to quadratic forms,
which we could not locate precisely in the literature.
We fix a nondegenerate quadratic form $R$ 
in $d$ variables with integer matrix,
and we define 
\begin{align*}
	&\phantom{( \alpha \in \T, \bftheta \in \T^d )} &
	F_R(\alpha,\bftheta)
	&= \sum_{\bfn} \omega(\bfn) e( \alpha R(\bfn) + \bftheta \cdot \bfn )
	&&( \alpha \in \T, \bftheta \in \T^d ).
\end{align*}
Our first minor-arc-type bound
is obtained by the standard Weyl differentiation
process for forms of high dimension
(see~\cite[Section~8.3.1.1]{Magyar:Birch}
or Davenport~\cite[Chapter~13]{Davenport:Book}).

\begin{proposition}
\label{thm:appqf:MinorArcBound}
Let $d \geq 1$.
For every $c_0 \in (0,1]$, there exists
a constant $C > 0$ depending at most on $c_0,d,R$ 
such that the following holds.
If $|F_R(\alpha,\bftheta)| \geq CN^{d/2}$,
there exist $a,q \in \Z$ such that
$|\alpha - \frac{a}{q}| \leq \frac{c_0}{qN}$,
$1 \leq q \leq c_0 N$ and $(a,q) = 1$.
\end{proposition}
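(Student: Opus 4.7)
The plan is to execute the classical Weyl differencing argument for quadratic phases, reducing the bound to a one-dimensional Diophantine approximation problem. Writing $R(\bfx) = \bfx^\transp A \bfx$ with $2A$ symmetric and integer, I would expand $|F_R(\alpha,\bftheta)|^2$ by the substitution $\bfh = \bfn - \bfn'$, using the identity $R(\bfn + \bfh) - R(\bfn) = 2 \bfh^\transp A \bfn + R(\bfh)$. Discarding the $\bfn$-independent phases (including $\bftheta \cdot \bfh$) via the triangle inequality yields
\begin{align*}
|F_R(\alpha,\bftheta)|^2
\leq \sum_{|\bfh| \leq 2N} \bigg| \sum_{\bfn} \omega(\bfn)\omega(\bfn+\bfh)\, e(2\alpha \bfh^\transp A \bfn) \bigg|,
\end{align*}
converting the original quadratic sum into an $\bfh$-average of linear exponential sums in $\bfn$.

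For each fixed $\bfh$, Poisson summation on the smooth product weight $\omega(\cdot)\omega(\cdot+\bfh)$ produces the standard pointwise bound $\prod_{i=1}^d \min(N, \|\alpha l_i(\bfh)\|^{-1})$, where $\bfl(\bfh) \coloneqq 2A\bfh$. Non-degeneracy of $A$ ensures that $\bfh \mapsto 2A\bfh$ is an $O_R(1)$-to-one map of $\Z^d$ that sends $\{|\bfh| \leq 2N\}$ into $\{|\bfl| \lesssim_R N\}$, so summing and separating the product over coordinates gives
\begin{align*}
|F_R(\alpha,\bftheta)|^2 \lesssim_R \prod_{i=1}^d \bigg( \sum_{|l| \lesssim_R N} \min\!\big( N, \|\alpha l\|^{-1} \big) \bigg).
\end{align*}
Under the hypothesis $|F_R| \geq CN^{d/2}$, at least one coordinate factor must be $\gtrsim_R C^{2/d} N$, so the one-dimensional quantity $\Sigma(\alpha) \coloneqq \sum_{|l| \lesssim N} \min(N, \|\alpha l\|^{-1})$ satisfies $\Sigma(\alpha) \gtrsim N$.

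Finally, I would invoke Dirichlet's theorem with parameter $Q = \lfloor c_0^{-1} N \rfloor$ to obtain $(a,q)=1$ with $q \leq Q$ and $|\alpha - a/q| \leq 1/(qQ) = c_0/(qN)$, and combine it with the classical divisor-type estimate
\begin{align*}
\Sigma(\alpha) \lesssim \Big( \tfrac{N}{q} + 1 \Big)\big( N + q \log q \big)
\end{align*}
(valid under such an approximation; see e.g.\ Davenport~\cite[Chapter~13]{Davenport:Book}) to force $q \leq c_0 N$ whenever $C$ is chosen sufficiently large in terms of $c_0$, $d$, and $R$. The main obstacle will be the spurious $\log N$ carried by the classical divisor estimate when $q$ is comparable to $c_0 N$: a log-free bound requires either exploiting that all $d$ coordinate sums saturate simultaneously (distributing the loss across the product) or, more robustly, bounding the full multidimensional sum $\sum_{|\bfl| \lesssim N} \prod_i \min(N, \|\alpha l_i\|^{-1})$ directly through a higher-dimensional Dirichlet pigeonhole, a technique standard in the Birch--Davenport circle-method literature.
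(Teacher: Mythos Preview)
Your opening Weyl-differencing step matches the paper's: both square $|F_R|$, shift by $\bfu$, and arrive at a sum over $\bfu$ of linear exponential sums in $\bfn$ with frequency $2\alpha M\bfu$ (your $2A\bfh$). The divergence is in what comes next.

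You factor the inner sum coordinatewise and, after the change of variable $\bfl = 2A\bfh$, bound $|F_R(\alpha,\bftheta)|^2 \lesssim_R \Sigma(\alpha)^d$ with $\Sigma(\alpha) = \sum_{|l| \lesssim N} \min(N, \|\alpha l\|^{-1})$, reducing everything to this single one-dimensional quantity. As you correctly flag, the classical bound $\Sigma(\alpha) \lesssim (N/q+1)(N + q\log q)$ carries a genuine $\log N$ in the regime $q \asymp N$: for $c_0 N < q \leq c_0^{-1}N$ it yields only $\Sigma(\alpha) \lesssim_{c_0} N\log N$, hence $|F_R| \lesssim_{c_0} N^{d/2}(\log N)^{d/2}$, and no constant $C$ independent of $N$ forces $q \leq c_0 N$. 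Your proposed fix (a) cannot rescue this, because after factoring all $d$ coordinate sums are \emph{literally the same} $\Sigma(\alpha)$; there is no independent information across coordinates over which to distribute the loss.

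The paper sidesteps the logarithm by never factoring. It keeps the multidimensional estimate
\[
|F_R(\alpha,\bftheta)|^2 \lesssim N^d \sum_{|\bfu| \leq 4N} \big(1 + N\|2\alpha M\bfu\|\big)^{-(d+1)},
\]
converts this by a box pigeonhole into $N^d \cdot \#\{|\bfu| \leq 8N : \|2\alpha M\bfu\| \leq 1/N\}$, and then applies the Birch--Davenport shrinking lemma to pass to $\#\{|\bfu| \leq L : \|2\alpha M\bfu\| \lesssim L/N^2\}$ at the cost of a factor $(N/L)^{2d}$. Taking $L = c_1 N$ with $c_1$ small and $C$ large forces a \emph{nonzero} $\bfu$ in this set, and $q \coloneqq 2|M\bfu|$ then furnishes the rational approximation directly, with no logarithm anywhere. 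This is exactly your option (b). So you have named the correct tool, but your primary factored route does not close without it, and once (b) is adopted the factoring detour becomes superfluous.
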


\begin{proof}
By definition, we have $R(\bfx) = \bfx^\transp M \bfx$,
where $M$ is a symmetric, non-singular integer $d \times d$ matrix.
For a vector $\bfx \in \R^d$,
we write $|\bfx| = \max( |x_1|,\dots,|x_n| )$
and $\| x \| = \min_{\bfn \in \Z^d} |\bfx - \bfn|$.
By squaring, we have
\begin{align*}
	|F_R( \alpha, \bftheta )|^2
	= \sum_{\bfn,\bfm \in \Z^d} \omega(\bfm) \omega(\bfn) e\big( \alpha (R(\bfm) - R(\bfn)) + \bftheta \cdot (\bfm - \bfn) \big).
\end{align*}
Letting $\bfm = \bfn + \bfu$ and $\Delta^{\times}_{\bfu} \omega(\bfn) = \omega(\bfn) \omega( \bfn + \bfu)$, we deduce that
\begin{align*}
	|F_R( \alpha, \bftheta )|^2
	&= \sum_{ |\bfu| \leq 4N } e( \alpha R(\bfu) + \bftheta \cdot \bfu ) 
	\sum_{\bfn \in \Z^d} \Delta^{\times}_{\bfu} \omega(\bfn) e( \bfn \cdot ( 2\alpha M \bfu ) )
	\\
	&\leq \sum_{ |\bfu| \leq 4N } | \wh{ \Delta^{\times}_{\bfu} \omega }( 2 \alpha M \bfu ) |.
\end{align*}
Since $\Delta^{\times}_{\bfu} \omega = \eta( \frac{\cdot}{N} ) \eta( \frac{\,\cdot\, + \bfu}{N})$ has support in $[-2N,2N]^d$
and satisfies $\| \partial^{\bfalpha} \Delta^{\times}_{\bfu} \omega \|_\infty \lesssim N^{-|\bfalpha|}$ 
for all $\bfalpha \in (\N \cup \{ 0 \})^d$,
one can verify through an application of Poisson's formula that 
$|\wh{ \Delta^{\times}_{\bfu} \omega }(\bfxi)| \lesssim_A N^d ( 1 + N \| \bfxi \| )^{-A}$
uniformly for $\bfxi \in \R^d$, for any $A > 0$.
Therefore
\begin{align*}
	|F_R( \alpha, \bftheta )|^2
	&\lesssim_d N^d \sum_{ |\bfu| \leq 4N } \big( 1 + N \| 2 \alpha M \bfu \| \big)^{-(d+1)}
	\\
	&\lesssim N^d \sum_{ |\bfr| \leq \frac{N}{2} } ( 1 + |\bfr| )^{-(d+1)} \cdot
	\#\big\lbrace |\bfu| \leq 4N \,:\, 2 \alpha M \bfu \in \tfrac{\bfr}{N} + [-\tfrac{1}{2N},\tfrac{1}{2N}] \bmod 1 \big\rbrace.
\end{align*}
If $\bfu$, $\bfu'$ belong to the set above,
then $\| 2 \alpha M (\bfu - \bfu') \| \leq \frac{1}{N}$, 
and therefore
\begin{align*}
	|F_R( \alpha, \bftheta )|^2
	\lesssim  N^d \cdot
	\#\{ |\bfu| \leq 8N \,:\, \| 2 \alpha M \bfu \| \leq \tfrac{1}{N} \}.
\end{align*}
Let $1 \leq L \leq N$ be a new parameter.
By a similar reasoning, if we partition the box $[-8N,8N]^d$
into subboxes of sidelength at most $L$,
and if we partition the box $[-\frac{1}{N},\frac{1}{N}]^d$
into subboxes of sidelength at most $\frac{L}{N^2}$, we obtain
\begin{align*}
	|F_R( \alpha, \bftheta )|^2
	\lesssim_d  L^{-2d} N^{3d} \cdot
	\#\{ |\bfu| \leq L \,:\, \| 2 \alpha M \bfu \| \leq \tfrac{4L}{N^2} \}.
\end{align*}
We choose $L = c_1 N$, where $c_1 \in (0,1]$ is to be determined later. 
If $|F_R( \alpha, \bftheta )| \geq C N^{d/2}$ for a large enough constant $C$
(depending on $d$ and $c_1$),
then there exists $\bfu \neq 0$ such that $|\bfu| \leq c_1 N$ and
$\| 2\alpha M \bfu \| \leq \frac{c_1}{qN}$,
and we let $q = 2|M \bfu|$.
Since $M$ is non-singular, we have $1 \leq q \lesssim_M c_1 N$
and $\| q \alpha \| \leq \frac{4c_1}{N}$,
and therefore there exists $a \in \Z$
such that $| \alpha - a/q | \leq \frac{4c_1}{qN}$.
This finishes the proof upon choosing $c_1$ small enough 
with respect to $d$, $M$ and upon reducing $a$ and $q$.
\end{proof}

On the major arcs, we use a standard majorant obtained through
the Poisson formula, using the square-root level of cancellation in 
the Gaussian sum and oscillatory integral associated
to non-degenerate quadratic forms.

\begin{proposition}
\label{thm:appqf:MajorArcBound}
Let $d \geq 1$.
Suppose that $\alpha \in \R$ is of the form $\alpha = \frac{a}{q} + \beta$ with
$a,q \in \Z$, $\beta\in\R$ such that 
$\|\beta\| \lesssim \frac{1}{qN}$, $1 \leq q \lesssim N$ and $(a,q) = 1$.
Then
\begin{align*}
	|F_R(\alpha,\bftheta)| \lesssim_\eps q^{ - d/2 + \eps } \min( N^d, |\beta|^{- d/2} ).
\end{align*}
\end{proposition}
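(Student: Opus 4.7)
The plan is to decompose the sum into residue classes modulo $q$, factor out a Gauss sum, and then apply Poisson summation to the remaining smooth sum, which will turn into an oscillatory integral controllable by stationary phase.

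First I would write $\bfn = q\bfm + \bfr$ with $\bfr$ ranging over residues modulo $q$, and use the fact that $R(q\bfm + \bfr) \equiv R(\bfr) \pmod{q}$ (since $R$ has an integer matrix and is a quadratic form) to obtain the factorization
\begin{align*}
	F_R(\alpha,\bftheta)
	= \sum_{\bfr \in (\Z/q\Z)^d} e\!\left( \tfrac{a}{q} R(\bfr) \right)
	\sum_{\bfm \in \Z^d} \omega(q\bfm + \bfr)\, e\!\left( \beta R(q\bfm + \bfr) + \bftheta \cdot (q\bfm + \bfr) \right).
\end{align*}
Applying Poisson summation to the inner sum in $\bfm$ with $f(\bfx) = \omega(\bfx) e(\beta R(\bfx) + \bftheta \cdot \bfx)$ converts it into $q^{-d} \sum_{\bfk \in \Z^d} \wh{f}(\bfk/q) e(\bfr \cdot \bfk/q)$. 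Summing over $\bfr$ then collapses the exponential into the complete Gauss sum $S(a,q;\bfk) = \sum_{\bfr \bmod q} e\!\left( \tfrac{a}{q} R(\bfr) + \tfrac{\bfk}{q} \cdot \bfr \right)$, giving
\begin{align*}
	F_R(\alpha,\bftheta)
	= q^{-d} \sum_{\bfk \in \Z^d} S(a,q;\bfk)\, \wh{f}(\bfk/q).
\end{align*}

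Next I would bound the two factors separately. For the Gauss sum, by completing the square (using that $\det M \neq 0$ and $(a,q) = 1$), the classical bound $|S(a,q;\bfk)| \lesssim_\eps q^{d/2 + \eps}$ holds uniformly in $\bfk$. For the oscillatory integral, $\wh{f}(\bfxi) = \int \omega(\bfx) e(\beta R(\bfx) + (\bftheta - \bfxi) \cdot \bfx) \dbfx$ has phase whose critical point $\bfx^* = (2\beta)^{-1} M^{-1}(\bfxi - \bftheta)$ lies in $\Supp(\omega) \subset [-2N,2N]^d$ only when $|\bfxi - \bftheta| \lesssim |\beta| N$. In that regime, stationary phase for the non-degenerate quadratic phase yields $|\wh{f}(\bfxi)| \lesssim |\beta|^{-d/2}$; combined with the trivial bound $|\wh{f}(\bfxi)| \leq \| \omega \|_1 \lesssim N^d$, we obtain $|\wh{f}(\bfxi)| \lesssim \min(N^d, |\beta|^{-d/2})$. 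Outside the critical regime, repeated integration by parts gives arbitrary polynomial decay in $N|\bfxi - \bftheta|$.

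Finally I would assemble the estimates. Because $|\beta| \lesssim 1/(qN)$, the critical-point condition $|\bfk/q - \bftheta| \lesssim |\beta| N$ becomes $|\bfk - q\bftheta| \lesssim |\beta| Nq \lesssim 1$, so only $O(1)$ frequencies $\bfk$ land in the stationary-phase region, while all others are controlled by the rapid decay (schematically $(1 + N|\bfk/q - \bftheta|)^{-A}$, which is summable in $\bfk$ against the uniform Gauss sum bound). Combining yields $|F_R(\alpha,\bftheta)| \lesssim q^{-d} \cdot q^{d/2 + \eps} \cdot \min(N^d, |\beta|^{-d/2}) = q^{-d/2 + \eps} \min(N^d, |\beta|^{-d/2})$. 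The main technical care is in the stationary-phase analysis with a non-degenerate but possibly indefinite Hessian $2\beta M$, where one must verify (by a standard change of variables diagonalizing $M$ over $\R$ and reducing to one-dimensional Fresnel integrals) that the signature plays no role in the modulus of the oscillatory integral; handling the error from Poisson frequencies just outside the stationary regime, and absorbing the coupling with $\bftheta$, is routine once this point is established.
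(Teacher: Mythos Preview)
Your proposal is correct and follows essentially the same approach as the paper: split into residue classes modulo $q$, apply Poisson summation to produce a Gauss-sum/oscillatory-integral decomposition, bound the Gauss sum by $q^{d/2+\eps}$, and handle the oscillatory integral by stationary phase (with non-stationary decay for the off-center Poisson frequencies). The only cosmetic differences are that the paper detects the residue class via character orthogonality rather than direct substitution, and it organizes the oscillatory-integral analysis into three explicit cases ($|\bfm|$ large, $\|\bftheta - \bfb/q\|$ large, and the genuinely stationary regime) rather than your two-regime split; the content is the same.
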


\begin{proof}
We define a Gaussian sum and an oscillatory integral by
\begin{align*}
	S(a,\bfb;q) = \sum_{\bfu \in \Z_q^d} e_q( a R(\bfu) + \bfb \cdot \bfu ),
	\qquad
	I(\beta,\bfgamma;N) = \int_{\R^d} \eta(\bfx) e( \beta N^2 R(\bfx) + N \bfgamma \cdot \bfx ) \dbfx.
\end{align*}
We write $\alpha \equiv \frac{a}{q} + \beta \bmod 1$
and we sum over residue classes modulo $q$ to obtain
\begin{align*}
	F_R(\alpha,\bftheta)
	= \sum_{\bfu \in \Z_q^d} e_q( a R(\bfu) ) 
	\sum_{\substack{\bfn \in \Z^d \,: \\ \bfn \equiv \bfu \bmod q }} \omega(\bfn) e( \beta R(\bfn) + \bftheta \cdot \bfn ).
\end{align*}
Writing $1_{\bfn \equiv \bfu \bmod q} = q^{-d} \sum_{\bfb \in \Z_q^d} e_q( \bfb \cdot (\bfu - \bfn) )$, we arrive at
\begin{align*}
	F_R( \alpha, \bftheta )
	= \sum_{\bfb \in \Z_q^d} q^{-d} S( a , \bfb ; q )
	\sum_{\bfn \in \Z^d} \omega(\bfn) e( \beta R(\bfn) + ( \bftheta - \tfrac{\bfb}{q} ) \cdot \bfn ).  
\end{align*}
By Poisson's formula and rescaling, it follows that
\begin{align}
	F_R( \alpha,\bftheta )
	\label{eq:appweyl:PoissonDcp}
	= \sum_{\bfb \in \Z_q^d} q^{-d} S( a , \bfb ; q ) 
	\sum_{ \bfm \in \Z^d } N^d \cdot I( \beta, \bftheta - \tfrac{\bfb}{q} - \bfm ; N ).
\end{align}
We write $I( \beta, \bftheta - \tfrac{\bfb}{q} - \bfm ; N ) 
= \int_\R \eta(\bfx) e( N \phi_{\bfb,\bfm}(\bfx) ) \dbfx$,
where
\begin{align*}
	\phi_{\bfb,\bfm}(\bfx) = \beta N R(\bfx) + ( \bftheta - \tfrac{\bfb}{q} - \bfm ) \cdot \bfx.
\end{align*}
On the support of $\eta$, we have $|x| \leq 2$ and therefore
\begin{align*}
	\nabla \phi_{\bfb,\bfm}(\bfx) = \theta - \tfrac{\bfb}{q} - \bfm + O( \tfrac{1}{q} )
\end{align*}
under our size condition on $\beta$.
We fix a large enough constant $C > 0$.

For $|\bfm| \geq C$, we have $|\nabla \phi_{\bfb,\bfm}| \asymp |\bfm|$ on $\Supp \eta$,
and therefore by stationary phase~\cite[Chapter VII]{Stein:Book}
we have $| \int_\R \eta  e( N \phi_{\bfb,\bfm}  ) |  \lesssim (N|\bfm|)^{-(d+1)}$.
For $\| \bftheta - \frac{\bfb}{q} \| \geq \frac{C}{q}$,
we have $|\nabla \phi_{\bfb,\bfm}| \asymp | \bftheta - \frac{\bfb}{q} - \bfm | \gtrsim \| \bftheta - \frac{\bfb}{q} \|$
on $\Supp \eta$ and $\| \frac{\phi_{\bfb,\bfm}}{ |\bftheta - \frac{\bfb}{q} - \bfm| } \|_{C^2} \lesssim 1$,
so that by stationary phase again we deduce that
$| \int_\R \eta  e( N \phi_{\bfb,\bfm}  ) | \lesssim ( N \| \bftheta - \frac{\bfb}{q} \| )^{-d}$.
Finally, for $|\bfm| \leq C$ and $ \| \bftheta - \frac{\bfb}{q} \| \leq \frac{C}{q}$,
we note that the phase is a non-degenerate quadratic form
and therefore we have an oscillatory integral estimate~\cite[Section~6]{Wolff:Book} of the form
$| \int_\R \eta  e( N \phi_{\bfb,\bfm}  ) | \lesssim (1 + |\beta| N^2)^{-d/2}$.

For the Gaussian sum, we use 
the simple squaring-differencing bound
$|S(a,\bfb;q)| \lesssim q^{d/2}$ for $(a,q) = 1$
(see e.g.~\cite{IK:Book} Lemma~20.12).
Inserting these various estimates into~\eqref{eq:appweyl:PoissonDcp} yields
\begin{align*}
	|F_R(\alpha,\bftheta)|
	&\lesssim_\eps
	q^{ - d/2 } \sum_{\substack{ \| \bftheta - \frac{\bfb}{q} \| \leq \frac{C}{q} \\ |\bfm| \leq C }} N^d (1 + |\beta| N^2)^{-\frac{d}{2}}
	\\
	&\phantom{\lesssim_\eps .}
	+ q^{ - d/2 } \sum_{\substack{ \| \bftheta - \frac{\bfb}{q} \| \geq \frac{C}{q} \\ |\bfm| \leq C }} \| \bftheta - \tfrac{\bfb}{q} \|^{-d}
	\ +\ q^{ d/2 } \sum_{|\bfm| \geq C} N^{-1} |\bfm|^{-(d+1)}
	\\
	&\lesssim 
	q^{-d/2 + \eps} N^d (1 + |\beta| N^2)^{-d/2} \,+\, q^{d/2 + \eps}.
\end{align*}
The second term may be absorbed into the first since
$|\beta| \lesssim \frac{1}{qN}$
and $1 \leq q \lesssim N$, and this concludes the proof.
\end{proof}

\section{A diagonalization argument}
\label{sec:appdiag}

In this section we present a simple argument,
possibly well-known to experts, by which
Theorem~\ref{thm:intro:RestrHyperbParab}
follows from~\cite[Corollary~1.3]{BD:Hypersurf}.

\textit{Proof of Theorem~\ref{thm:intro:RestrHyperbParab}.}

Let $Q$ be a non-singular quadratic form with integer coefficients.
Fix a sequence $a : \Z^d \rightarrow \C$ supported on $[-N,N]^d$;
by homogeneity we may assume $\| a \|_2 = 1$.
We let
\begin{align*}
	I
	= \| F_a \|_p^p
	= \int_{[-\frac{1}{2},\frac{1}{2}]^{d+1}} \bigg| \sum_{\bfn \in \Z^d} a(\bfn) e\big( \alpha Q(\bfn) + \bftheta \cdot \bfn \big) \bigg|^p \dalpha \dbftheta.	
\end{align*}
We pick a linear transformation $T$ of $\Q^d$ such that $Q = D \circ T$,
where $D$ is a diagonal form with coefficients $\pm 1$.
Then by defining the lattice $\Lambda = T(\Z^d)$ and 
by a change of variables $\bftheta = T^*(\bfxi)$, we have
\begin{align*}
	I 
	&= \int_{[-\frac{1}{2},\frac{1}{2}]^{d+1}} \bigg| \sum_{\bfm \in \Lambda} a(T^{-1}(\bfm)) 
	e\big( \alpha D(\bfm) + (T^{-1})^*(\bftheta) \cdot \bfm \big) \bigg|^p \dalpha \dbftheta,
	\\
	&= |\det T| \int_{E} \bigg| \sum_{\bfm \in \Lambda} a(T^{-1}(\bfm)) e\big( \alpha D(\bfm) + \bfxi \cdot \bfm \big) \bigg|^p \dalpha \dbfxi,
\end{align*}
where $E = [-\frac{1}{2},\frac{1}{2}] \times (T^*)^{-1}( [-\frac{1}{2},\frac{1}{2}]^d )$.
We have $\Lambda \subset q^{-1} \Z^d$ for some $q \in \N$ depending on $Q$,
and by a change of variables $(\alpha,\bfxi) \leftarrow (q^2 \alpha,q\bfxi)$,
we have
\begin{align*}
	I =
	q^{d+2} |\det T| \int_{F} 
	\bigg| \sum_{\bfell \in q \Lambda} a(T^{-1}(\bfell/q)) e\big( \alpha D(\bfell) +  \bfxi \cdot \bfell \big) \bigg|^p \dalpha \dbfxi,
\end{align*}
where $F = [-\frac{1}{2q^2},\frac{1}{2q^2}] \times (T^*)^{-1}( [-\frac{1}{2q},\frac{1}{2q}]^d )$.
Finally, we can cover $F$ by finitely many translated copies
of $[-\frac{1}{2},\frac{1}{2}]^{d+1}$,
and since $q \Lambda \subset \Z^d \cap [-CN,CN]^d$,
we may apply the usual restriction estimate for diagonal forms of 
Bourgain-Demeter~\cite[Corollary~1.3]{BD:Hypersurf} to obtain the estimate
\begin{align*}
	\| F_a \|_p^p
	\lesssim_\eps
	\begin{cases}
		N^{\frac{sp}{2} - s + \eps} \| a \|_2^p 
		& \text{for $2 \leq p \leq \frac{2(d - s + 2)}{d - s}$,}
		\\
		N^{\frac{dp}{2} - (d+2) + \eps} \| a \|_2^p
		&\text{for $p > \frac{2(d - s + 2)}{d-s}$.}
	\end{cases}
\end{align*}
The $N^\eps$ factor in the supercritical range can be removed 
via (a minor modification of) Bourgain's $\eps$-removal lemma
for the paraboloid $( x_1 , \dots , x_d , x_1^2 + \dotsb + x_d^2 )$.
(Alternatively, one can use Theorem~\ref{thm:intro:TruncRestrQuad}
to remove this factor, via the standard $\eps$-removal process~\cite[Lemma~3.1]{HH:epsremoval_parab}).
\qed

\bibliographystyle{amsplain}
\bibliography{epsremoval_quad}

\bigskip

\textsc{\footnotesize Department of mathematics,
University of British Columbia,
Room 121, 1984 Mathematics Road,
Vancouver BC V6T 1Z2, Canada
}

\textit{\small Email address: }\texttt{\small khenriot@math.ubc.ca}

\textsc{\footnotesize Heilbronn Institute for Mathematical Research
School of Mathematics,
University of Bristol,
Howard House, Queens Avenue, 
Bristol BS8 1SN, United Kingdom
}

\textit{\small Email address: }\texttt{\small kevin.hughes@bristol.ac.uk}

\end{document}